\newtheorem{thm}{\bf Theorem}[section]
\newtheorem{theoremx}{\bf Theorem}
\newtheorem{questionx}[theoremx]{\bf Question}
\newtheorem{prop}[thm]{\bf Proposition}
\newtheorem{lemma}[thm]{\bf Lemma}
\newtheorem{cor}[thm]{\bf Corollary}
\theoremstyle{definition}
\newtheorem{definition}[thm]{\bf Definition}
\theoremstyle{remark}
\newtheorem{example}[thm]{\bf Example}
\numberwithin{equation}{section}
\DeclareMathOperator{\Spec}{{Spec}}
\DeclareMathOperator{\Ass}{{Ass}}
\DeclareMathOperator{\Char}{{char}}
\DeclareMathOperator{\Min}{{Min}}
\DeclareMathOperator{\Sing}{{{Sing}}}
\DeclareMathOperator{\Mat}{{Mat}}
\def\fm{\mathfrak{m}}
\def\fn{\mathfrak{n}}
\def\fp{\mathfrak{p}}
\def \KK{\mathbb K}
\def \ZZ{\mathbb Z}
\def \NN{\mathbb N}
\def \VV{\mathbb V}
\def \II{\mathbb I}
\def \A{\mathcal A}
\newcommand{\bigzero}{\mbox{\normalfont\Large\bfseries 0}}
\begin{document}
\title[Symbolic Powers of Derksen Ideals]{Symbolic Powers of Derksen Ideals} 
\author[Sandra Sandoval-Gómez]{Sandra Sandoval-Gómez}
\address{University of Notre Dame\\ Department of Mathematics, 255 Hurley, Notre Dame, IN, 46556, USA.}
\email{ssandov3@nd.edu}
\subjclass[2010]{Primary 	 ; Secondary  .}
\begin{abstract}
Given that symbolic and ordinary powers of an ideal do not always coincide, we look for conditions on the ideal such that the equality holds for every natural number. This paper focuses on studying the equality for Derksen ideals defined by finite groups acting linearly on a polynomial ring. 
\end{abstract}

\keywords{Derksen ideals, ordinary powers, symbolic powers}
\subjclass[2010]{13A50, 13F20}
\maketitle

\section{Introduction}

Given a polynomial ring in $d$ variables $R=\KK[x_1, \ldots, x_d]$ and a finite group $G$ which acts on $R$, the Derksen ideal of $S = \KK[x_1, \ldots, x_d, y_1, \ldots, y_d]$ is defined by
$$
I_G = \bigcap_{g \in G} (y_1 - g(x_1), \ldots , y_d - g(x_d)).
$$
The generators of the Derksen ideal give the generators of the invariant ring $R^G$ \cite{Der99}. That is, if $f_1, \ldots, f_r$ are generators of  $I_G$, then $\rho(\pi(f_1)), \ldots, \rho(\pi(f_r))$ generate $R^G$ as a $\KK$-algebra, where $\pi : S \rightarrow R$ is the projection and $\rho : R \rightarrow R^G$ is the Reynolds operator.\\

Given an ideal $I \subseteq R$ and $n \in \NN$, the $n$-th symbolic power of $I$ is defined as
$$
I^{(n)} = \bigcap_{\fp \in \Ass(R/I)}  I^n R_{\fp} \cap R
$$ 
where $\Ass(R/I)$ is the set of associated primes of $I$. Symbolic powers of ideals have been studied intensely over the last two decades (see \cite{DDSG+18} for a recent survey). Given that the ordinary power is always contained in the symbolic power, it is natural to ask if the other containment holds for every natural number. In general, this question has a negative answer. For instance, consider the prime ideal $\fp=(x,y)$ in the ring ${\large \sfrac{\KK[x,y,z]}{(x^n - yz)}}$. In this case, $y \in \fp^{(2)}$ but $y \not\in \fp^2$. 

Given the previous fact, another question arises: under what conditions does the equality hold? There are several cases in which the equality holds for every natural number \cite{Hoc73, HSV89}. We study the equality between the symbolic and the ordinary powers of the Derksen ideal.\\

Symbolic powers of ideals which arise from the action of a group have been of recent interest. They help to give counterexamples to the Harbourne's Conjecture; let $G$ be a finite group generated by pseudoreflections and determines an arrangement $\mathcal{A}$ of hyperplanes. Let $\mathcal{J}(\mathcal{A})$ be a radical ideal which defines the singular locus of the reflection arrangement $\mathcal{A}$. It is known that $\mathcal{J}(\mathcal{A})^{(3)} \not\subseteq \mathcal{J}(\mathcal{A})^2$ for certain groups $G$. Dumnicki, Szemberg, and Tutaj-Gasi\'nska \cite{DST13} and Harbourne and Seceleanu \cite{HS15} showed it for the infinite family of monomial groups $G(m,m,3)$; and Klein and Wiman showed it for two classical groups $G_{24}$ and $G_{27}$ respectively \cite{BDHHLPS, BDHHSS}. Drabkin and Seceleanu generalized this result for groups $G_{29}, G_{33}, G_{34}$ and $G(m,m,n)$ with $m, n \geq 3$ \cite{DS20}.\\

\hspace*{-0.6cm} In this manuscript we give a positive answer in several cases for the following question.

\vspace*{0.2cm}

\begin{questionx}[{Jeffries}]\label{ConjDerksen}
Let $R=\KK[x_1, \ldots, x_d]$ be any polynomial ring, $G$ be any finite group which acts linearly on $R$ and $I_G$ be the Derksen ideal. Is $I_G^n=I_G^{(n)}$ fulfilled for every $n\in\ZZ_{>0}$?
\end{questionx}

\vspace*{0.2cm}

\hspace*{-0.6cm} There is some computational evidence that this question has a positive answer. In this article, we prove the equality between symbolic and ordinary powers of Derksen ideals in the following cases.

\vspace*{0.2cm}

\begin{theoremx}[{see Theorem \ref{dimension 1}}]\label{dimension 1 intro}
Let $R = \KK[x]$ be a polynomial ring in one variable over the field $\KK$ and $G$ be any finite group acting faithfully on $R$. Then $I_G^{(n)} = I_G^n$ for every $n \geq 1$.
\end{theoremx}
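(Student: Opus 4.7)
The plan is to exploit that, in the one-variable case, the Derksen ideal collapses to a principal radical ideal, after which the statement follows from an essentially routine calculation in a UFD. Since $G$ acts linearly and faithfully on $\KK[x]$, every $g \in G$ sends $x \mapsto \lambda_g x$ for some $\lambda_g \in \KK^\times$, and $g \mapsto \lambda_g$ is an injective homomorphism $G \hookrightarrow \KK^\times$. In particular the scalars $\lambda_g$ are pairwise distinct, so the linear forms $y - \lambda_g x$ are pairwise non-associate irreducibles in $S = \KK[x,y]$, and the principal ideals $(y - \lambda_g x)$ are in fact pairwise comaximal: for $g \neq h$ the element $(\lambda_h - \lambda_g)x$ lies in their sum, hence so does $y$, and the sum is the unit ideal.

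The next step is to conclude that $I_G$ itself is principal. In a UFD, an intersection of pairwise comaximal principal ideals equals their product, so
\[
I_G \;=\; \bigcap_{g \in G}(y - \lambda_g x) \;=\; \Bigl(\prod_{g \in G}(y - \lambda_g x)\Bigr) \;=:\; (f),
\]
where $f$ is squarefree. The associated primes of $S/I_G$ are then exactly the minimal primes over $(f)$, namely the height-one primes $\fp_g = (y - \lambda_g x)$, one for each $g \in G$.

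Finally, I would compute $I_G^{(n)}$ directly from its definition. The localization $S_{\fp_g}$ is a DVR in which every factor $y - \lambda_h x$ with $h \neq g$ is a unit, so $I_G^n S_{\fp_g} = (y - \lambda_g x)^n S_{\fp_g}$. Contracting back, and using that a power of a principal prime in a UFD contracts to the corresponding ordinary power, gives
\[
I_G^{(n)} \;=\; \bigcap_{g \in G}(y - \lambda_g x)^n \;=\; \prod_{g \in G}(y - \lambda_g x)^n \;=\; f^n \;=\; I_G^n,
\]
where the second equality uses that powers of pairwise comaximal ideals remain pairwise comaximal. I do not foresee a real obstacle in dimension one: the entire content of the statement is the reduction of $I_G$ to a principal (hence normally flat) ideal, after which symbolic and ordinary powers are forced to agree. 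The only point deserving care is making the localization step explicit, in order to justify the first equality in the last display.
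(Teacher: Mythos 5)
Your route is essentially the paper's: reduce to the fact that $I_G$ is the principal ideal generated by the product of the pairwise non-associate linear forms $y-\lambda_g x$, identify $I_G^{(n)}$ with $\bigcap_{g\in G}(y-\lambda_g x)^n$, and then identify that intersection with $\bigl(\prod_{g\in G}(y-\lambda_g x)\bigr)^n = I_G^n$; your localization computation of $I_G^{(n)}$ is a correct, slightly more explicit substitute for the paper's Lemma~\ref{intersection of symbolic powers}. There is, however, one false claim, invoked twice: the ideals $(y-\lambda_g x)$ are \emph{not} pairwise comaximal. For $g\neq h$ the sum $(y-\lambda_g x)+(y-\lambda_h x)$ does contain $(\lambda_h-\lambda_g)x$ and hence both $x$ and $y$, but that makes the sum equal to the homogeneous maximal ideal $(x,y)$ of $S=\KK[x,y]$, not the unit ideal; geometrically, the two lines $\VV(y-\lambda_g x)$ and $\VV(y-\lambda_h x)$ both pass through the origin, so they always meet, and indeed no two proper homogeneous ideals can be comaximal. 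Consequently the justifications ``intersection of pairwise comaximal principal ideals equals their product'' and ``powers of pairwise comaximal ideals remain pairwise comaximal'' do not apply as stated.

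Fortunately, comaximality is not what your argument actually needs. In the UFD $S$ an intersection of principal ideals is generated by the least common multiple of the generators; since the $y-\lambda_g x$ are pairwise non-associate irreducibles (which you have already established from faithfulness), the lcm of the elements $(y-\lambda_g x)^n$ is $\prod_{g\in G}(y-\lambda_g x)^n$. This gives directly both $\bigcap_{g\in G}(y-\lambda_g x)=\bigl(\prod_{g\in G}(y-\lambda_g x)\bigr)$ and $\bigcap_{g\in G}(y-\lambda_g x)^n=\prod_{g\in G}(y-\lambda_g x)^n$, which is exactly how Theorem~\ref{dimension 1} argues. With the two appeals to comaximality replaced by this lcm (coprimality of the factors) argument, your proof is complete and coincides in substance with the paper's.
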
 

\vspace*{0.2cm}

\begin{theoremx}[{see Theorem \ref{Z2} and Theorem \ref{Z22}}]\label{Z2 intro}
Let $R = \KK[x_1, \ldots, x_d]$ be a polynomial ring over the field $\KK$ and $G=\ZZ/2\ZZ$. Then $I_G^{(n)} = I_G^n$ for every $n \geq 1$.
\end{theoremx}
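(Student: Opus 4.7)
Since $G = \ZZ/2\ZZ$ has a unique nontrivial element $g$, the Derksen ideal decomposes as $I_G = P \cap Q$, where $P = (y_1 - x_1, \ldots, y_d - x_d)$ and $Q = (y_1 - g(x_1), \ldots, y_d - g(x_d))$ are linear primes, each generated by a regular sequence. The plan is to choose new coordinates on $S$ adapted to $g$ in which $P$ and $Q$ become generated by disjoint sets of variables sharing a common piece; in these coordinates $I_G$ is expressible as the sum of a variable ideal and a product of two variable ideals, and the comparison of $I_G^n$ with $I_G^{(n)}$ reduces to a monomial computation.

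First I normalize the action of $g$ on $R$. When $\chara \KK \neq 2$, the involution $g$ is diagonalizable with eigenvalues $\pm 1$, so I pick coordinates with $g(x_i) = \pm x_i$. When $\chara \KK = 2$, the involution is unipotent and admits a Jordan basis consisting of fixed variables together with pairs $(a_j, b_j)$ on which $g$ acts by $a_j \mapsto a_j$, $b_j \mapsto b_j + a_j$. Accordingly I perform a companion linear change of variables on the $y$-block of $S$: in characteristic not $2$ I replace the $y_i$'s by the combinations $y_i - x_i$ and $y_i + x_i$, and in characteristic $2$ I use the analogous substitutions dictated by the Jordan pairs. In either case I obtain a new polynomial basis of $S$ in which $P = (V, U)$ and $Q = (V, W)$ for three pairwise disjoint tuples of variables $V$, $U$, $W$.

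Since $V \subseteq P \cap Q$, passing to $S/(V)$ presents $P$ and $Q$ as ideals generated by disjoint sets of variables, for which intersection coincides with product; lifting back yields
\[
I_G \;=\; (V) + (U)(W).
\]
Because $P$ and $Q$ are complete intersections of variables, $P^{(n)} = P^n$ and $Q^{(n)} = Q^n$ for all $n \geq 1$, and the primary decomposition $I_G = P \cap Q$ yields $I_G^{(n)} = P^n \cap Q^n$. All three of $P^n = (V,U)^n$, $Q^n = (V,W)^n$, and the expansion $I_G^n = \sum_{i=0}^n (V)^i (U)^{n-i}(W)^{n-i}$ are monomial ideals in the new basis.

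The remaining step is monomial bookkeeping. A monomial $V^p U^q W^r$ (with multi-indices $p$, $q$, $r$) lies in $P^n \cap Q^n$ iff $|p|+|q|\geq n$ and $|p|+|r|\geq n$, and it lies in $I_G^n$ iff some $i \in [0, n]$ satisfies $|p|\geq i$, $|q|\geq n-i$, $|r|\geq n-i$; the choice $i = \max(0, n-|q|, n-|r|)$ shows these conditions are equivalent, giving $I_G^n = I_G^{(n)}$. The main subtlety is the characteristic-$2$ case, where $g$ cannot be diagonalized and one must use the Jordan form instead --- which presumably explains the paper's splitting of the statement into two separate theorems; once the correct linear substitution is identified, the structural description of $I_G$ and the monomial analysis of its powers proceed uniformly in all characteristics.
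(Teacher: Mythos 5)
Your proposal is correct, and its structural reduction is the same as the paper's: put the involution in Jordan form (diagonal when $\Char(\KK)\neq 2$, unipotent $2\times 2$ blocks when $\Char(\KK)=2$), make the companion linear change of coordinates on $S$, and write $I_G=(V)+(U)(W)$ with $V,U,W$ pairwise disjoint sets of variables; the characteristic-$2$ case is handled in the paper by exactly the renaming you describe, which is why the statement is split into Theorem \ref{Z2} and Theorem \ref{Z22}, and after the renaming it reduces to the same ideal. Where you genuinely diverge is the concluding step. The paper proves $J^{(n)}=J^n$ for $J=(V)+(U)(W)$ by quoting two external results: the expansion $J^{(n)}=\sum_k (V)^{n-k}\,((U)(W))^{(k)}$ for sums of ideals in disjoint sets of variables \cite[Theorem 7.8]{BCGHJNSVV16}, plus the fact that $(U)(W)$ is the edge ideal of a complete bipartite graph, whose symbolic and ordinary powers coincide \cite{GVV05}. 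You instead use $I_G^{(n)}=P^n\cap Q^n$ (the paper's Lemma \ref{intersection of symbolic powers}, which your appeal to the primary decomposition and to $P^{(n)}=P^n$, $Q^{(n)}=Q^n$ reproduces) and check $P^n\cap Q^n\subseteq \bigl((V)+(U)(W)\bigr)^n$ by the direct monomial computation with $i=\max(0,\,n-|q|,\,n-|r|)$; that computation is valid, since all ideals involved are monomial in the new coordinates and membership is tested on monomials by the stated degree inequalities. Your route buys a self-contained, elementary verification; the paper's route is shorter on the page and places the computation in the context of known results on symbolic powers of squarefree monomial and bipartite edge ideals.
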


\vspace*{0.2cm}

\begin{theoremx} [{see Theorem \ref{diagonal}}]\label{diagonal intro}
Let $R = \KK[x_1, \ldots, x_d]$ be a polynomial ring over the field $\KK$ and $G=\langle g_1, \ldots, g_a \rangle$ be a finite group with $a \leq d$ which acts linearly on $R$ in the following way:
$$
g_i(x_j)=
\begin{cases} 
     x_j & \hbox{if } j \neq i;\\
     \omega_{i}x_j & \hbox{if } j=i.
\end{cases}
$$
where $\omega_{i}$ is a $d_i$-root of unity with $d_i \geq 1$. Then, $I_G^{(n)} = I_G^n$ for every $n \geq 1$.
\end{theoremx}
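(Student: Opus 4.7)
The plan is to identify $I_G$ explicitly as a complete intersection and then invoke the standard fact that ordinary powers of a complete intersection in a Cohen--Macaulay ring have no embedded primes. Since $g_1, \ldots, g_a$ commute and each $g_i$ acts nontrivially only on $x_i$, every element of $G$ has the form $g = g_1^{k_1} \cdots g_a^{k_a}$ with $0 \leq k_i < d_i$, acting by $g(x_j) = \omega_j^{k_j} x_j$ for $j \leq a$ and $g(x_j) = x_j$ otherwise; without loss of generality I replace each $\omega_i$ by a primitive root of unity of its exact order, so that $d_i$ is the order of $g_i$. Writing $P_{(k_1, \ldots, k_a)}$ for the corresponding prime in the intersection defining $I_G$, the first goal is to prove
\[
I_G = J := (y_1^{d_1} - x_1^{d_1}, \ldots, y_a^{d_a} - x_a^{d_a}, y_{a+1} - x_{a+1}, \ldots, y_d - x_d).
\]
The inclusion $J \subseteq I_G$ is immediate from the factorization $y_i^{d_i} - x_i^{d_i} = \prod_k (y_i - \omega_i^k x_i)$ and the fact that each $y_j - x_j$ (for $j > a$) already lies in every $P_{(k_1, \ldots, k_a)}$. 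For the reverse inclusion I would show that $J$ is radical via the Chinese Remainder Theorem: for fixed $i$, the linear ideals $(y_i - \omega_i^k x_i)$ are pairwise comaximal in $\KK[x_i, y_i]$, so
\[
S/J \cong \bigotimes_{i=1}^{a} \Bigl( \prod_{k=0}^{d_i - 1} \KK[x_i, y_i] / (y_i - \omega_i^k x_i) \Bigr) \otimes_\KK \KK[x_{a+1}, \ldots, x_d],
\]
which is a finite product of polynomial rings over $\KK$ and hence reduced. Since the minimal primes of $J$ are exactly the $P_{(k_1, \ldots, k_a)}$, it follows that $J = \bigcap P_{(k_1, \ldots, k_a)} = I_G$.

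Once $I_G = J$ is established, its $d$ generators live in pairwise disjoint sets of variables, so they form a regular sequence of length $d = \operatorname{height}(I_G)$ in the Cohen--Macaulay polynomial ring $S$. I would then invoke the classical theorem that, for an ideal generated by a regular sequence, the associated graded ring is a polynomial ring, $\operatorname{gr}_{I_G}(S) \cong (S/I_G)[T_1, \ldots, T_d]$, so each $I_G^{n-1}/I_G^n$ is a free $S/I_G$-module. An induction on $n$ using the short exact sequences
\[
0 \to I_G^{n-1}/I_G^n \to S/I_G^n \to S/I_G^{n-1} \to 0
\]
together with the depth lemma then shows that $S/I_G^n$ is Cohen--Macaulay for every $n \geq 1$. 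In particular $I_G^n$ is unmixed, so $\Ass(S/I_G^n) = \Min(S/I_G) = \Ass(S/I_G)$, and comparing with the definition of the symbolic power immediately yields $I_G^{(n)} = I_G^n$.

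The step I expect to be the main obstacle is the first one, the explicit description $I_G = J$: one must show that the intersection of $|G| = d_1 \cdots d_a$ minimal primes collapses to an ideal with only $d$ generators. This reduces to the reducedness of $S/J$ as sketched above, where the essential input is that $\KK$ contains the required roots of unity so that each $y_i^{d_i} - x_i^{d_i}$ splits into distinct linear factors over $\KK$. Everything downstream is then a formal consequence of the complete intersection structure and is insensitive to the specific group $G$.
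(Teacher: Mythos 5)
Your overall strategy is the same as the paper's: identify $I_G$ with the complete intersection $J=(y_1^{d_1}-x_1^{d_1},\ldots,y_a^{d_a}-x_a^{d_a},y_{a+1}-x_{a+1},\ldots,y_d-x_d)$ and then use the fact that powers of an ideal generated by a regular sequence have no embedded primes (the paper cites Hochster for this; your associated-graded/depth argument is a correct standard substitute). The containment $J\subseteq I_G$ via the factorization $y_i^{d_i}-x_i^{d_i}=\prod_k(y_i-\omega_i^kx_i)$ is also fine, as is your normalization making $\omega_i$ of exact order $d_i$ (which guarantees the factors are distinct).

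However, the step you yourself identify as the crux --- reducedness of $S/J$ --- is justified by a false claim. For fixed $i$, the ideals $(y_i-\omega_i^kx_i)$ in $\KK[x_i,y_i]$ are \emph{not} pairwise comaximal: $(y_i-\omega_i^kx_i)+(y_i-\omega_i^lx_i)=(x_i,y_i)$, a proper ideal, since all these lines pass through the origin. Consequently the displayed isomorphism of $S/J$ with a finite product of polynomial rings cannot hold; indeed $S/J$ is a standard graded ring with degree-zero part $\KK$, hence has no nontrivial idempotents, whereas the right-hand side is a product of $d_1\cdots d_a$ factors (concretely, $\KK[x,y]/(y^2-x^2)\not\cong \KK[x,y]/(y-x)\times\KK[x,y]/(y+x)$). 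The conclusion you want is still true, but it needs a different argument: each $y_i^{d_i}-x_i^{d_i}$ is a squarefree element of a UFD (a product of pairwise nonassociate irreducible linear forms), so the principal ideal it generates is radical, and one can then propagate reducedness to $S/J$ by induction on $a$, which is exactly how the paper proves it (its Theorem \ref{diagonal}, after first identifying $R^G=\KK[x_1^{d_1},\ldots,x_a^{d_a},x_{a+1},\ldots,x_d]$ and invoking Theorem \ref{theo equality Derksen ideal and AG}, which is why the paper assumes $\KK$ algebraically closed). You would also need to justify, rather than assert, that the minimal primes of $J$ are exactly the ideals $P_{(k_1,\ldots,k_a)}$; this follows, e.g., because any prime containing $J$ must contain one linear factor of each generator and hence some $P_{(k)}$, and all the $P_{(k)}$ have height $d=\operatorname{height}(J)$. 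With those repairs your route is sound and, unlike the paper's, avoids computing $R^G$ and does not need $\KK$ algebraically closed; as written, though, the reducedness step is a genuine gap.
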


\vspace*{0.2cm}

\hspace*{-0.55cm} In addition, we study the equality locally on the punctured spectrum.

\vspace*{0.2cm}

\begin{theoremx}[{see Theorem \ref{cyclic}}]\label{cyclic intro}
Let $R = \KK[x_1, \ldots, x_d]$ be a polynomial ring over a field $\KK$ and $G$ be a group in which each element $g \neq e$ fixes only the origin. Then, $I_G^{(n)} S_\fp = I_G^n S_\fp$ for all $n \in \NN$ and for all prime ideals $\fp $ different from $ (x_1, \ldots, x_d, y_1 , \ldots, y_d)$.
\end{theoremx}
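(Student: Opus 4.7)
The plan is to compute $I_G^{(n)}$ explicitly as an intersection of powers of the graph primes $P_g := (y_1 - g(x_1), \ldots, y_d - g(x_d))$, and then show that the fixed-point hypothesis forces any prime $\fp \subsetneq \fM = (x_1,\ldots,x_d,y_1,\ldots,y_d)$ to lie above at most one $P_g$. This reduces both sides of the desired equality to $P_{g_0}^n S_\fp$ for a single $g_0 \in G$.

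First I would identify $\Ass(S/I_G) = \Min(I_G) = \{P_g : g \in G\}$ and establish that $I_G^{(n)} = \bigcap_{g \in G} P_g^n$. Each $P_g$ is prime, being the vanishing ideal of the graph of the linear automorphism $g \colon \AA^d \to \AA^d$, and it is generated by the regular sequence $y_1 - g(x_1), \ldots, y_d - g(x_d)$ in the polynomial ring $S$. Since $I_G = \bigcap_g P_g$ is radical, its associated primes agree with its minimal primes. Because $P_g$ is generated by a regular sequence, a standard argument via the associated graded ring (which is the polynomial ring $(S/P_g)[T_1,\ldots,T_d]$, a domain) shows that $P_g^n$ is $P_g$-primary, i.e. $P_g^{(n)} = P_g^n$. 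Combining these,
\[
I_G^{(n)} = \bigcap_{g \in G} P_g^{(n)} = \bigcap_{g \in G} P_g^n.
\]

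The main computational input is the separation identity $P_{g_1} + P_{g_2} = \fM$ for distinct $g_1, g_2 \in G$. Setting $h = g_2^{-1}g_1 \neq e$, the hypothesis that $h$ fixes only the origin means that, viewed as a linear map on $\KK^d$, the scalar $1$ is not an eigenvalue of $h$; equivalently, $h - \mathrm{id}$ is invertible. Consequently the $d$ linear forms $g_1(x_i) - g_2(x_i) = g_2(h(x_i) - x_i)$ span the ideal $(x_1,\ldots,x_d)$, and together with the generators $y_i - g_1(x_i) \in P_{g_1}$ they produce $y_1,\ldots,y_d$ as well, giving $P_{g_1} + P_{g_2} = \fM$.

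Finally, fix any prime $\fp \subsetneq \fM$. If $\fp$ contains no $P_g$, then $I_G \not\subseteq \fp$ (a prime containing a finite intersection of ideals contains one of them), so $I_G S_\fp = S_\fp$ and both $I_G^n S_\fp$ and $I_G^{(n)} S_\fp$ equal $S_\fp$. Otherwise $\fp \supseteq P_{g_0}$ for a unique $g_0 \in G$, since containing any other $P_g$ would force $\fp \supseteq P_{g_0} + P_g = \fM$. Using that localization commutes with finite intersections,
\[
I_G^{(n)} S_\fp = \bigcap_{g \in G} P_g^n S_\fp = P_{g_0}^n S_\fp = (I_G S_\fp)^n = I_G^n S_\fp.
\]
I expect the separation identity $P_{g_1}+P_{g_2}=\fM$ to be the crux; once it is in hand, everything else is formal bookkeeping via primary decomposition and localization.
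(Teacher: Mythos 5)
Your proof is correct, but it takes a noticeably different route from the paper's. The paper (Theorem \ref{equality characterization}) first proves Proposition \ref{characterization}, showing via the Dufresne--Jeffries lemma that the fixed-point hypothesis is equivalent to $\Sing(S/I_G)=\{\fn\}$; it then argues that for $\fp\neq\fn$ the ring $(S/I_G)_\fp$ is regular, so $I_GS_\fp$ is a complete intersection (in fact a single $J_gS_\fp$ generated by linear forms), whose powers are unmixed and hence have no embedded primes, forcing $I_G^{(n)}S_\fp=I_G^nS_\fp$. You instead localize the explicit decomposition $I_G^{(n)}=\bigcap_g P_g^n$ (this is the paper's Lemma \ref{intersection of symbolic powers}) and prove the separation identity $P_{g_1}+P_{g_2}=\fn$ for $g_1\neq g_2$ by pure linear algebra: $h=g_2^{-1}g_1$ fixes only the origin, so $h-\mathrm{id}$ is invertible and the forms $g_1(x_i)-g_2(x_i)$ generate $(x_1,\ldots,x_d)$. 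The same separation fact appears inside the paper's proof of Proposition \ref{characterization}, but there it is deduced from $\VV(J_g+J_h)=\{0\}$ together with radicality, which uses the standing assumption in that section that $\KK$ is algebraically closed; your argument avoids both the Nullstellensatz and the singular-locus/unmixedness machinery, works over an arbitrary field (matching the hypotheses as stated in the theorem), and reduces the equality of powers to the transparent computation $I_G^{(n)}S_\fp=P_{g_0}^nS_\fp=I_G^nS_\fp$. The trade-off is that the paper's route yields the extra structural information $\Sing(S/I_G)=\{\fn\}$ and the observation about embedded primes of $I_G^n$, which your more direct computation does not record.
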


\vspace*{0.2cm}

\section{Notation}\label{prelim}

\vspace*{0.2cm}

In this section we establish the notation used throughout the rest of the manuscript. We assume $R=\KK[x_1,\ldots, x_d]$ and $S = \KK[x_1, \ldots, x_d, y_1,  \ldots , y_d]$ are standard graded polynomial rings over the field $\KK$, $\fm=(x_1,\ldots,x_d)$ and $\fn = (x_1, \ldots, x_d, y_1, \ldots, y_d)$ are the homogeneous maximal ideals of $R$ and $S$ respectively, and $G$ is any finite group.

\vspace*{0.2cm}

\begin{definition} \label{def group action} 
Let $G$ be a group and $X$ be a set. A group action of $G$ on $X$ is a function $\varphi : G \times X \rightarrow X$ such that
\begin{itemize}
\item[i)] $\varphi(e,x) = x$ for every $x \in X$ where $e$ is the identity of $G$.

\item[ii)] $\varphi(gh,x) = \varphi(g , \varphi(h,x))$ for every $x \in X$ and every $g, h \in G$.
\end{itemize}
We denote $\varphi(g,x)$ by $g(x)$.
\end{definition}

\newpage

\hspace*{-0.6cm} We say that $G$ acts linearly on $R$ if $g(a) = a$ for every $a \in \KK$ and $g(x_i)$ is an homogeneous linear polynomial for every $i=1,\ldots, d$.\\

\hspace*{-0.56cm} We denote by $R^G$ the invariant ring, that is, the set of elements $f \in R$ such that $g(f) = f$ for every $g \in G$.\\

\hspace*{-0.6cm} If $G$ is linearly reductive, there exists an unique $G$-invariant linear projection $\rho : R \rightarrow R^G$, that is, $\rho(g \cdot f) = \rho(f)$ for all $g \in G$ and $f \in R$, and $\rho(f) = f$ for all $f \in R^G$, which is called the Reynolds operator \cite{Spr89}. It has the following properties:
\begin{itemize}
\item[1.] $\rho$ is a $R^G$-module homomorphism.
\item[2.] If $W \subseteq R$ is a $G$-invariant linear subspace, then $\rho(W) = W^G$.
\end{itemize}
A finite group $G$ is linearly reductive if and only if $|G|$ is a unit in $\KK$ \cite{Spr89}. In this case the Reynolds operator is
$$
\rho(f) = \dfrac{1}{|G|} \displaystyle\sum_{g \in G} g(f).
$$

\vspace*{0.4cm}

\section{Derksen Ideals} \label{Section 3}

\vspace*{0.2cm}

\hspace*{-0.56cm} In order to find generators to the invariant ring, it is enough to find homogeneous generators of the zero-fiber ideal. Corollary \ref{corol 3.5} motivates interest in the Derksen ideal. 

\vspace*{0.2cm}

\begin{definition} \label{def Derksen ideals}
For a linear action of a finite group $G$ on the polynomial ring $R$ we define the Derksen ideal as \index{Derksen ideal} the ideal
$$
I_G = \bigcap_{g \in G} (y_1 - g(x_1), \ldots, y_d - g(x_d))
$$
in $S$.
\end{definition}

\vspace*{0.1cm}

\begin{definition} \label{def zero-fiber}
We define the zero-fiber ideal as the ideal of $R$ generated by all homogeneous invariants of positive degree, and we denote it by $I_{\mathscr{N}}$.
\end{definition}

\vspace*{0.2cm}

\hspace*{-0.56cm} The notation of the zero-fiber is motivated by the fact that its zero set is called the nullcone which we denote by $\mathscr{N}$.\\

\hspace*{-0.56cm} We now recall several properties of Derksen ideals.

\vspace*{0.2cm}

\begin{prop}[\cite{Der99}] \label{prop generators of invariant ring}
Assume $G$ is linearly reductive. If $h_1, \ldots, h_s $ are homogeneous and generate $I_{\mathscr{N}}$, then $\rho(h_1), \ldots, \rho(h_s)$ generate $R^G$ as a $\KK$-algebra.
\end{prop}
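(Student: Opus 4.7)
The plan is to reduce to the case where the generators of $I_{\mathscr{N}}$ are $G$-invariant and then apply Hilbert's classical degree induction. The obstacle is that the given $h_1,\ldots,h_s$ need not be invariant, so averaging the identity $f = \sum a_i h_i$ via $\rho$ does not directly yield an expression for $f$ in terms of the $\rho(h_i)$: one cannot pull the $h_i$ outside of $\rho$ unless $h_i \in R^G$. My plan is therefore to first prove that $\rho(h_1),\ldots,\rho(h_s)$ already generate $I_{\mathscr{N}}$ as an $R$-ideal, and only afterwards run the standard induction with these invariant generators.

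The key technical observation is that $G$ acts trivially on $I_{\mathscr{N}}/\fm I_{\mathscr{N}}$. Indeed, any $f \in I_{\mathscr{N}}$ can be written as $\sum r_j \phi_j$ with $r_j \in R$ and each $\phi_j$ a homogeneous invariant of positive degree, because $I_{\mathscr{N}}$ is generated by such elements. Splitting $r_j = c_j + r_j^+$ with $c_j \in \KK$ and $r_j^+ \in \fm$, the contribution $r_j^+ \phi_j$ lies in $\fm I_{\mathscr{N}}$, so $f \equiv \sum c_j \phi_j \pmod{\fm I_{\mathscr{N}}}$ is congruent to an invariant; hence every class in $I_{\mathscr{N}}/\fm I_{\mathscr{N}}$ is $G$-fixed. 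In particular $h_i - \rho(h_i) \in \fm I_{\mathscr{N}}$ for each $i$, so setting $J = (\rho(h_1),\ldots,\rho(h_s))R$ yields $I_{\mathscr{N}} \subseteq J + \fm I_{\mathscr{N}}$. Graded Nakayama applied to the finitely generated positively graded $R$-module $I_{\mathscr{N}}/J$ then forces $I_{\mathscr{N}} = J$.

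With invariant generators of $I_{\mathscr{N}}$ in hand, I would conclude by induction on degree that every homogeneous $f \in R^G$ lies in $\KK[\rho(h_1),\ldots,\rho(h_s)]$. The base case $\deg f = 0$ is trivial. For $\deg f > 0$, $f$ is a positive-degree invariant and hence lies in $I_{\mathscr{N}} = (\rho(h_1),\ldots,\rho(h_s))$, so one writes $f = \sum a_i \rho(h_i)$ with each $a_i \in R$ homogeneous of degree $\deg f - \deg \rho(h_i) < \deg f$ (only indices with $\rho(h_i) \neq 0$ need to appear). Applying $\rho$ and using its $R^G$-linearity together with $\rho(h_i) \in R^G$ gives $f = \rho(f) = \sum \rho(a_i)\rho(h_i)$, and by the inductive hypothesis each $\rho(a_i)$ is a polynomial in the $\rho(h_j)$. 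The only step requiring genuine insight is the triviality of the $G$-action on $I_{\mathscr{N}}/\fm I_{\mathscr{N}}$; everything else is a formal consequence of graded Nakayama and the $R^G$-linearity of $\rho$.
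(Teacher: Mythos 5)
Your proof is correct: the paper itself gives no argument for this proposition (it is quoted from \cite{Der99}), and your two-step route --- first showing $G$ acts trivially on $I_{\mathscr{N}}/\fm I_{\mathscr{N}}$ so that graded Nakayama upgrades $\rho(h_1),\ldots,\rho(h_s)$ to ideal generators of $I_{\mathscr{N}}$, then running Hilbert's degree induction using the $R^G$-linearity of $\rho$ --- is exactly the standard (Derksen/Hilbert) argument. The only points left implicit, both immediate, are that $\fm$ and $I_{\mathscr{N}}$ are $G$-stable (so the quotient action is defined) and that the averaging formula for $\rho$, valid since $|G|$ is invertible, gives $h_i-\rho(h_i)\in\fm I_{\mathscr{N}}$ from the triviality of the action.
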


\vspace*{0.2cm}

\begin{prop}[\hspace*{-0.15cm}{{{ \cite[Theorem 3.1]{Der99}}}}] \label{prop 3.4}
We have the equality
$$
((y_1, \ldots , y_d) + I_G ) \cap R = I_{\mathscr{N}}.
$$
\end{prop}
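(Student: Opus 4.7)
The plan is to prove both inclusions, with the forward inclusion being a direct consequence of producing, for each invariant $f$, a preimage of $f$ in $(y_1,\ldots,y_d) + I_G$, and the reverse inclusion being obtained by averaging the defining relations of $I_G$ via the Reynolds operator.

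For $I_{\mathscr{N}} \subseteq \big((y_1,\ldots,y_d)+I_G\big)\cap R$, I would take an arbitrary homogeneous invariant $f \in R^G$ of positive degree and verify
$$
f(y_1,\ldots,y_d) - f(x_1,\ldots,x_d) \in I_G.
$$
Writing $I_G = \bigcap_{g \in G} J_g$ with $J_g := (y_1-g(x_1),\ldots,y_d-g(x_d))$, it is enough to check this modulo each $J_g$: under the substitution $y_i \equiv g(x_i)$, the invariance of $f$ yields $f(y_1,\ldots,y_d) \equiv f(g(x_1),\ldots,g(x_d)) = f(x_1,\ldots,x_d)$. Since $f$ has no constant term, $f(y_1,\ldots,y_d) \in (y_1,\ldots,y_d)$, so
$$
f(x_1,\ldots,x_d) = f(y_1,\ldots,y_d) - \big(f(y_1,\ldots,y_d) - f(x_1,\ldots,x_d)\big) \in (y_1,\ldots,y_d) + I_G.
$$
This places every element of $R^G_+$ inside the $R$-ideal $\big((y_1,\ldots,y_d) + I_G\big) \cap R$, which therefore contains the $R$-ideal $I_{\mathscr{N}}$ generated by $R^G_+$.

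For the reverse containment, I would first reduce to showing $\psi(I_G) \subseteq I_{\mathscr{N}}$, where $\psi : S \to R$ is the $\KK$-algebra map $y_i \mapsto 0$, $x_i \mapsto x_i$. This reduction is routine: if $r = u+F$ with $u \in (y_1,\ldots,y_d)$, $F \in I_G$ and $r \in R$, applying $\psi$ gives $r = \psi(F)$; conversely, $\psi(F) = F - (F - \psi(F))$ exhibits $\psi(F)$ in $I_G + (y_1,\ldots,y_d)$. Now for $F \in I_G$, I expand $F = \sum_\alpha F_\alpha(x)\, y^\alpha$ with $F_\alpha \in R$; the condition $F \in J_g$ for every $g \in G$ is equivalent to the $R$-valued identity
$$
\sum_\alpha F_\alpha(x)\, g(x)^\alpha = 0 \quad \text{for every } g \in G.
$$
Averaging these identities over $g$ via the Reynolds operator $\rho$ (the coefficients $F_\alpha(x)$ are independent of $g$) yields $\sum_\alpha F_\alpha(x)\, \rho(x^\alpha) = 0$, and isolating the $\alpha = 0$ term gives
$$
\psi(F) = F_0(x) = - \sum_{\alpha \neq 0} F_\alpha(x)\, \rho(x^\alpha) \in R \cdot R^G_+ = I_{\mathscr{N}},
$$
because $\rho(x^\alpha)$ is a positive-degree homogeneous invariant whenever $\alpha \neq 0$.

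The main obstacle is the $\subseteq$ direction: the defining relations $F(x,g(x)) = 0$ are indexed by $G$, and must be amalgamated into a single identity with $G$-invariant coefficients. The Reynolds operator performs this averaging cleanly in the linearly reductive setting ($|G|$ a unit in $\KK$, as recalled in Section~\ref{prelim}). In the modular case one can only invoke the transfer $T(h) := \sum_{g \in G} g(h)$ and conclude $|G|\cdot \psi(F) \in I_{\mathscr{N}}$, which suffices whenever $|G|$ is invertible in $\KK$ but otherwise demands additional care.
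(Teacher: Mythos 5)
The paper offers no proof of Proposition \ref{prop 3.4}: it is quoted from \cite[Theorem 3.1]{Der99}, whose standing hypothesis is that the group is linearly reductive --- for finite $G$ this means $|G|$ is invertible in $\KK$, exactly the hypothesis of Proposition \ref{prop generators of invariant ring}. Read with that hypothesis, your argument is correct and is in the same spirit as Derksen's original one: the containment $I_{\mathscr{N}} \subseteq ((y_1,\ldots,y_d)+I_G)\cap R$ by checking $f(\underline{y})-f(\underline{x})$ modulo each $J_g=(y_1-g(x_1),\ldots,y_d-g(x_d))$, and the reverse containment by reducing to $\psi(I_G)\subseteq I_{\mathscr{N}}$ for the specialization $\psi\colon y_i\mapsto 0$, $x_i \mapsto x_i$, and then averaging the identities $F(\underline{x},g(\underline{x}))=0$ over $G$ with the Reynolds operator. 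The individual steps are sound: $S/J_g\cong R$ via $y_i\mapsto g(x_i)$, so membership in $J_g$ is exactly the vanishing you use; each $\rho(x^\alpha)$ with $\alpha\neq 0$ is a homogeneous invariant of positive degree; and $((y_1,\ldots,y_d)+I_G)\cap R$ is an ideal of $R$, so it is enough to catch the homogeneous invariants of positive degree. It is worth noting that your first containment uses no reductivity at all.

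Your closing caveat should, however, be promoted to an explicit hypothesis rather than left as a remark, because in the modular case the proposition itself is false, not merely your method of proof. Take $\Char(\KK)=2$, $R=\KK[x_1,x_2,x_3,x_4]$ and $G=\{e,g\}$ with $g(x_1)=x_1$, $g(x_2)=x_2+x_1$, $g(x_3)=x_3$, $g(x_4)=x_4+x_3$. Then $y_3-x_3\in I_G$ and $(y_2-x_2)(y_4-x_4-x_3)\in I_G$ (the first factor lies in $J_e$, the second in $J_g$), and applying $\psi$ shows that $x_3$ and $x_2x_4+x_2x_3$ lie in $((y_1,\ldots,y_4)+I_G)\cap R$, hence so does $x_2x_4$. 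On the other hand, the degree-two piece of $I_{\mathscr{N}}$ is $R_1\cdot(R^G)_1+(R^G)_2$, where $(R^G)_1=\langle x_1,x_3\rangle$ and $(R^G)_2$ is spanned by $x_1^2$, $x_1x_3$, $x_3^2$, $x_2^2+x_1x_2$, $x_4^2+x_3x_4$, $x_1x_4+x_2x_3$; no element of this space has an $x_2x_4$ term, so $x_2x_4\notin I_{\mathscr{N}}$. So without $|G|$ invertible the equality fails, and the transfer argument you mention cannot be repaired.
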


\vspace*{0.3cm}

\begin{cor}[\hspace*{-0.15cm} \cite{Der99}] \label{corol 3.5}
If the ideal $I_G$ is generated by $f_1(\underline{x}, \underline{y}), \ldots, f_r(\underline{x}, \underline{y})$, then $f_1(\underline{x}, 0), \ldots,$ $f_r(\underline{x}, 0)$ generate $I_{\mathscr{N}}$.
\end{cor}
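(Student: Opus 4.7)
The plan is to deduce the corollary directly from Proposition \ref{prop 3.4} by using the evaluation-at-zero map $\pi : S \to R$ sending $y_i \mapsto 0$, whose kernel is exactly $(y_1, \ldots, y_d)$. Under this identification, $f_i(\underline{x}, 0) = \pi(f_i)$, and for any $f \in S$ the difference $f - f(\underline{x}, 0)$ lies in $(y_1, \ldots, y_d)$. With this in hand, the corollary becomes a short two-sided containment.

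For the inclusion $(f_1(\underline{x}, 0), \ldots, f_r(\underline{x}, 0)) \subseteq I_{\mathscr{N}}$, I would note that each $f_i(\underline{x}, 0)$ lies in $R$ and, since $f_i \in I_G$ and $f_i - f_i(\underline{x}, 0) \in (y_1, \ldots, y_d)$, we get $f_i(\underline{x}, 0) \in (I_G + (y_1, \ldots, y_d)) \cap R$, which equals $I_{\mathscr{N}}$ by Proposition \ref{prop 3.4}.

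For the reverse inclusion $I_{\mathscr{N}} \subseteq (f_1(\underline{x}, 0), \ldots, f_r(\underline{x}, 0))$, I would take any $h \in I_{\mathscr{N}}$. By Proposition \ref{prop 3.4} we can write
$$
h = \sum_{i=1}^{d} y_i a_i(\underline{x}, \underline{y}) + \sum_{j=1}^{r} f_j(\underline{x}, \underline{y}) \, b_j(\underline{x}, \underline{y})
$$
for some $a_i, b_j \in S$. Applying $\pi$ (i.e.\ setting $\underline{y} = 0$) and using that $h \in R$ so $\pi(h) = h$, the $y_i$-terms vanish and we obtain
$$
h = \sum_{j=1}^{r} f_j(\underline{x}, 0)\, b_j(\underline{x}, 0),
$$
which exhibits $h$ as an $R$-linear combination of the $f_j(\underline{x}, 0)$.

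There is no real obstacle here: the content of the corollary is entirely contained in Proposition \ref{prop 3.4}, and the only mild point to keep track of is that setting $\underline{y}=0$ is an $R$-algebra homomorphism $S \to R$, so it preserves $R$-linear combinations and kills any element divisible by some $y_i$. Everything else is bookkeeping.
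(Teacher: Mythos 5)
Your proof is correct. The paper itself gives no argument for this corollary (it is simply cited to Derksen), and your two-containment derivation from Proposition~\ref{prop 3.4} --- using that the evaluation map $\pi\colon S\to R$, $y_i\mapsto 0$, is an $R$-algebra homomorphism fixing $R$, killing $(y_1,\ldots,y_d)$, and sending each generator $f_i$ to $f_i(\underline{x},0)$ --- is exactly the standard way this statement follows from that proposition, with no gaps.
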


\newpage

\begin{thm}[{{{Cf. }}}\cite{Duf09}] \label{theo equality Derksen ideal and AG}
If the field $\KK$ is algebraically closed, then the Derksen ideal $I_G$ is equal to $\sqrt{( \{f(\underline{x}) - f(\underline{y}) : f \in R^G \} )}$.
\end{thm}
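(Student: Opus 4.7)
The plan is to prove the equality by applying Hilbert's Nullstellensatz to both sides. Since the right-hand side is a radical ideal by construction, it suffices to show $I_G$ is radical and that the two ideals cut out the same variety in $\mathbb{A}^{2d}_{\KK}$.

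First I would observe that $I_G$ is already radical. For each $g \in G$, the ideal $\fP_g := (y_1 - g(x_1), \ldots, y_d - g(x_d)) \subseteq S$ is the defining ideal of the graph $\Gamma_g = \{(\underline{x}, g(\underline{x})) : \underline{x} \in \AA^d\}$, a $d$-dimensional linear subspace; hence $\fP_g$ is prime and $I_G = \bigcap_{g \in G} \fP_g$ is radical. Consequently $V(I_G) = \bigcup_{g \in G} \Gamma_g$, i.e., the set of pairs $(\underline{a}, \underline{b}) \in \AA^{2d}$ such that $\underline{b}$ lies in the $G$-orbit of $\underline{a}$. Let $J := (\{f(\underline{x}) - f(\underline{y}) : f \in R^G\})$. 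Then by definition $V(J)$ is the set of pairs $(\underline{a}, \underline{b})$ such that $f(\underline{a}) = f(\underline{b})$ for every $f \in R^G$.

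The containment $V(I_G) \subseteq V(J)$ is immediate: if $\underline{b} = g(\underline{a})$ for some $g \in G$, then $f(\underline{b}) = f(g(\underline{a})) = f(\underline{a})$ for every invariant $f$. The key step is the reverse containment, namely the classical \emph{orbit separation} statement: if $\underline{a}, \underline{b} \in \AA^d$ lie in distinct $G$-orbits, then some invariant $f \in R^G$ separates them. To prove this, let $O_{\underline{a}}$ and $O_{\underline{b}}$ be the two disjoint finite orbits; by polynomial interpolation (possible because $\KK$ is infinite, being algebraically closed) choose $h \in R$ with $h \equiv 1$ on $O_{\underline{a}}$ and $h \equiv 0$ on $O_{\underline{b}}$. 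Then
$$
f := \prod_{g \in G} g(h)
$$
is $G$-invariant, with $f(\underline{a}) = 1$ and $f(\underline{b}) = 0$, separating the orbits. This argument does not require $|G|$ to be invertible in $\KK$, so the Reynolds operator is not needed. Taking contrapositive, $V(J) \subseteq V(I_G)$, so $V(J) = V(I_G)$.

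The conclusion then follows from the Nullstellensatz: since $\KK$ is algebraically closed,
$$
\sqrt{J} = I(V(J)) = I(V(I_G)) = \sqrt{I_G} = I_G.
$$
I expect the only subtle point to be the orbit separation lemma, which must be stated carefully to avoid requiring linear reductivity; the product construction above (rather than an averaging one) handles all finite groups uniformly in any characteristic.
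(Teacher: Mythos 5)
Your proposal is correct and follows essentially the same route as the paper: both reduce the statement, via radicality of $I_G$ and the Nullstellensatz, to separating distinct $G$-orbits by an invariant obtained as the product of the $G$-translates of a suitable polynomial. The only (immaterial) difference is that the paper takes that seed polynomial to be an affine-linear form vanishing at $\underline{b}$ and nonvanishing on the finite orbit of $\underline{a}$, whereas you produce it by interpolation on the two disjoint finite orbits; both constructions avoid the Reynolds operator and work in any characteristic.
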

\begin{proof}
Let $J \subseteq R$ be an ideal. We denote by $\VV(J) $ the vanishing of $J$. We observe that $I_G = \sqrt{( \{f(\underline{x}) - f(\underline{y}) : f \in R^G  \})}$ is equivalent to $\VV \left(\sqrt{( \{f(\underline{x}) - f(\underline{y}) : f \in R^G \})}\right) =  \VV(I_G) $, because $I_G$ is radical. We note that
$$
\VV( \{ f(\underline{x}) - f(\underline{y}) : f \in R^G \}) = \VV \left(\sqrt{( \{f(\underline{x}) - f(\underline{y}) : f \in R^G \})} \right) ,
$$
and
$$
\begin{array}{rcl} \vspace*{0.2cm}
\VV(I_G) &=& \VV \left( \bigcap_{g \in G} (y_1 - g(x_1), \ldots, y_d - g(x_d)) \right) \\\vspace*{0.2cm}
&=& \bigcup_{g \in G} \VV (\{ y_1 - g(x_1), \ldots, y_d - g(x_d) \}) \\\vspace*{0.2cm}
&=& \displaystyle\bigcup_{g \in G} \{ (\underline{a}, g(\underline{a})) \ | \ a \in \KK^d \}.
\end{array}
$$
Therefore, we need to show that $\VV( \{ f(\underline{x}) - f(\underline{y}) : f \in R^G \}) = \displaystyle\bigcup_{g \in G} \{ (\underline{a}, g(\underline{a})) \ | \ a \in \KK^d \}$. Let $(a, g(a)) \in \KK^{2d}$ for some $a \in \KK^d$ and some $g \in G$. We note that
$$
f(a) - f(g(a)) = f(a) - g(f(a)) = f(a) - f(a) = 0
$$
for any $f \in R^G$. Then, $\displaystyle\bigcup_{g \in G} \{ (\underline{a}, g(\underline{a})) \ | \ a \in \KK^d \} \subseteq \VV( \{ f(\underline{x}) - f(\underline{y}) : f \in R^G \})$. Conversely, let $(a,b) \in \KK^d \times \KK^d$ such that $b \not\in G \cdot a$. There exists $\ell(x) = \sum_{i=1}^n c_i x_i + c_0$ such that $\ell(b)=0$ and $\ell(g(a)) \neq 0$ for all $g \in G$. We define
$$
f = \prod_{g \in G } g \circ \ell .
$$
We observe that for each $\sigma \in G$, we have $\sigma(f)(x) = \displaystyle\prod_{g \in G} \sigma(g \circ \ell)(x) = \displaystyle\prod_{g \in G} (\sigma \circ g)(\ell(x)) = \displaystyle\prod_{h \in G} h (\ell(x)) = f(x)$. Therefore, $f \in R^G$. Furthermore,
$$
f(b) = \prod_{g \in G} g(\ell(b)) = \prod_{g \in G} g(0) = 0 \ \ \ \text{ and } \ \ \  f(a) = \prod_{g \in G} g(\ell(a)) = \ell(g(a)) \neq 0
$$
because $\ell(g(a))\neq 0$ for every $g \in G$. So, there is $f \in R^G$ such that $f(a) \neq f(b)$. 
\end{proof}

\vspace*{0.4cm}

\section{Symbolic Powers of Derksen Ideals}

\vspace*{0.2cm}

\hspace*{-0.56cm} In this section we show Theorems \ref{dimension 1 intro}, \ref{Z2 intro}, and \ref{diagonal intro}.  We start by giving a known description for symbolic powers of Derksen ideals. This result is useful in order to show equality between symbolic and ordinary powers.

\vspace*{0.1cm}

\begin{lemma} \label{intersection of symbolic powers}
Let $G$ be a finite group which acts linearly on the polynomial ring $R$. If $I_G$ is the Derksen ideal, then for all $n \geq 1$
$$
I_G^{(n)} = \bigcap_{g \in G} (y_1 - g(x_1), \ldots , y_d - g(x_d))^n.
$$
\end{lemma}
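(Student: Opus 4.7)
The plan is to unwind the symbolic power from its definition and reduce everything to a local calculation at each minimal prime $P_g := (y_1 - g(x_1), \ldots, y_d - g(x_d))$. Since $I_G = \bigcap_{g \in G} P_g$ is an intersection of primes, it is radical, so $S/I_G$ has no embedded primes and $\Ass(S/I_G) = \mathrm{Min}(S/I_G)$ is the (set of distinct) $P_g$'s. Hence by definition
\[
I_G^{(n)} = \bigcap_{g \in G} \bigl( I_G^n\, S_{P_g} \cap S \bigr).
\]

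Next, I would analyze $I_G S_{P_g}$ for a fixed $g$. Each $P_h$ with $h \neq g$ (and $P_h \neq P_g$) is a prime of the same height $d$ as $P_g$ and is not contained in $P_g$, so $P_h S_{P_g} = S_{P_g}$. Consequently $I_G S_{P_g} = \bigcap_{h} P_h S_{P_g} = P_g S_{P_g}$, and therefore $I_G^n S_{P_g} = P_g^n S_{P_g}$.

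The key input is that $P_g$ is generated by the regular sequence of linear forms $y_1 - g(x_1), \ldots, y_d - g(x_d)$, so $S/P_g$ is a polynomial ring (in particular a domain) and the associated graded ring $\mathrm{gr}_{P_g}(S)$ is isomorphic to $(S/P_g)[T_1, \ldots, T_d]$, a domain. This forces $P_g^n/P_g^{n+1}$ to be a torsion-free $S/P_g$-module, so $P_g^n$ has no embedded primes and is $P_g$-primary. Equivalently, $P_g^n S_{P_g} \cap S = P_g^n$.

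Combining these two observations gives
\[
I_G^{(n)} = \bigcap_{g \in G} \bigl( P_g^n S_{P_g} \cap S \bigr) = \bigcap_{g \in G} P_g^n,
\]
which is the desired equality. The only potentially delicate point is the primary-ness of $P_g^n$, but since $P_g$ is a complete intersection of linear forms, this is immediate from the regular-sequence structure; no real obstacle remains.
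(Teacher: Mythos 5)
Your proposal is correct and follows essentially the same route as the paper: reduce to the primes $P_g=(y_1-g(x_1),\ldots,y_d-g(x_d))$ and use that each $P_g$ is a complete intersection, so its powers are $P_g$-primary. The only difference is that you prove inline the two facts the paper simply cites --- the decomposition $I_G^{(n)}=\bigcap_g P_g^{(n)}$ (via the localization at each minimal prime) and $P_g^{(n)}=P_g^n$ (via the associated graded ring instead of the reference to Hochster) --- which makes your argument more self-contained but not different in substance.
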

\begin{proof}
Since each ideal $(y_1 - g(x_1), \ldots , y_d - g(x_d))$ is generated by a regular sequence, we have $(y_1 - g(x_1), \ldots , y_d - g(x_d))^{(n)} = (y_1 - g(x_1), \ldots, y_d - g(x_d))^n$ \cite{Hoc73}. Hence,
$$
I_G^{(n)} = \displaystyle\bigcap_{g \in G} (y_1 - g(x_1), \ldots , y_d - g(x_d))^{(n)} = \displaystyle\bigcap_{g \in G} (y_1 - g(x_1), \ldots , y_d - g(x_d))^n. 
$$ 
\end{proof}

\vspace*{0.2cm}

\begin{thm} \label{dimension 1}
Let $R = \KK[x]$ be a polynomial ring in one variable and $G$ be any finite group acting faithfully on $R$. Then, $I_G^{(n)} = I_G^n$ for all $n \in \NN$.
\end{thm}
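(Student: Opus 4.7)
The plan is to reduce the problem to a computation with principal ideals in the two-variable polynomial ring $S = \KK[x,y]$, using the fact that in one variable faithful linear actions are very restricted.

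First, I would argue that since $G$ acts linearly and faithfully on $R = \KK[x]$, every $g \in G$ must send $x$ to some nonzero scalar multiple $c_g x$, giving an injective homomorphism $G \hookrightarrow \KK^\times$. Finite subgroups of $\KK^\times$ are cyclic, so $G = \langle g \rangle$ is cyclic of some order $m$ (with $m$ coprime to $\chara \KK$ when the characteristic is positive, otherwise the action could not be faithful), and $g(x) = \omega x$ where $\omega \in \KK$ is a primitive $m$-th root of unity.

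Next, I would apply Lemma \ref{intersection of symbolic powers} to obtain
\[
I_G^{(n)} = \bigcap_{i=0}^{m-1} (y - \omega^i x)^n .
\]
Each ideal $(y - \omega^i x)^n \subseteq S = \KK[x,y]$ is principal, and the linear forms $y - \omega^i x$ are pairwise coprime (they are distinct irreducibles in the UFD $S$). Hence the intersection of these principal ideals is generated by the least common multiple, namely
\[
I_G^{(n)} \;=\; \Bigl( \prod_{i=0}^{m-1} (y - \omega^i x)^n \Bigr) \;=\; \Bigl( \prod_{i=0}^{m-1} (y - \omega^i x) \Bigr)^{\!n}.
\]

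Finally, using the factorization $\prod_{i=0}^{m-1}(T - \omega^i) = T^m - 1$ applied to $T = y/x$ and clearing denominators, one gets $\prod_{i=0}^{m-1}(y - \omega^i x) = y^m - x^m$. The same factorization (or Lemma \ref{intersection of symbolic powers} with $n=1$) also shows that $I_G = (y^m - x^m)$ is principal, so
\[
I_G^n = (y^m - x^m)^n = \Bigl( \prod_{i=0}^{m-1}(y - \omega^i x) \Bigr)^{\!n} = I_G^{(n)}.
\]
I do not anticipate a real obstacle here; the only subtlety is the initial structural observation that a faithful finite linear action on $\KK[x]$ forces $G$ to be cyclic with generator acting by a primitive root of unity, which then turns the problem into a statement about powers of a single principal ideal in a UFD.
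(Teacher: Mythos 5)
Your proposal is correct and follows essentially the same route as the paper: both reduce via Lemma \ref{intersection of symbolic powers} to an intersection of $n$-th powers of pairwise distinct irreducible linear forms $y-g(x)$ in $\KK[x,y]$, identify that intersection with the product, and conclude $I_G^{(n)} = \bigl(\prod_{g}(y-g(x))\bigr)^n = I_G^n$. Your extra steps (classifying $G$ as cyclic acting by a primitive root of unity and writing $I_G=(y^m-x^m)$) are a harmless concrete instantiation that the paper's argument does not need, since faithfulness already makes the forms $y-g(x)$ distinct.
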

\begin{proof}
Since $G$ acts faithfully on $R$, $g(x) \neq h(x)$ for all $g, h \in G$. Therefore, the ideals $(y - g(x))$ for each $g \in G$ are distinct and so the ideals $(y-g(x))^n$ for each $g \in G$ are distinct. In addition, given $G$ acts linearly on $R$, $\{ y - g(x) \ | \ g \in G \}$ are irreducible polynomials. Thus,
$$
\begin{array}{rcl}
\displaystyle\bigcap_{g \in G} (y - g(x)) = \displaystyle\prod_{g \in G} (y - g(x)) \ \ \ \ \ \ \text{ and } \ \ \ \ \ \ \displaystyle\bigcap_{g \in G} (y - g(x))^n = \displaystyle\prod_{g \in G} (y - g(x))^n.
\end{array}
$$
Therefore,
$$
\begin{array}{rcl} \vspace*{0.3cm}
I_G^{(n)} &=& \displaystyle\bigcap_{g \in G} (y - g(x))^n =  \displaystyle\prod_{g \in G} (y - g(x))^n  \\
&=& \left( \displaystyle\prod_{g \in G} (y - g(x)) \right)^n  = \left( \displaystyle\bigcap_{g \in G} (y - g(x)) \right)^n = I_G^{n}.   
\end{array} 
$$
\end{proof}


\vspace*{0.6cm}

\begin{example}
Let $R = \KK[x_1,x_2]$ and $G = \{1, g \}$ where
$$
g \begin{pmatrix}
x_1 \\
x_2 
\end{pmatrix} = \begin{pmatrix}
-x_1 \\
x_2 
\end{pmatrix}.
$$
Then
$$
\begin{array}{rcl} \vspace*{0.3cm}
I_G = (y_1 - x_1, y_2 - x_2) \cap (y_1 + x_1, y_2 - x_2 ) = (y_1^2 - x_1^2, y_2 - x_2)
\end{array}
$$
Note that $\{y_1^2 - x_1^2, y_2 - x_2 \}$ is a regular sequence. Thus, $I_G^{(n)} = I_G^n$ for every $n \geq \NN$.\\

\hspace*{-0.6cm} More generally, for every group of order 2 and any polynomial ring, the symbolic powers of the Derksen Ideal are equal to the ordinary powers as we show next.
\end{example}

\vspace*{0.5cm}

\hspace*{-0.6cm} For Theorem \ref{Z2 intro} we separate the proof into two cases: characteristic $2$ and different than $2$. The difference is based in the associated matrix to the action of the element different from the identity in $G$. This matrix is diagonalizable when $\Char(\KK) \neq 2$, while this is not necessarily true when $\Char(\KK)=2$. However for the second case, its Jordan Canonical Form matrix is simple enough to yield a similar proof of the diagonalizable case. To show Theorem \ref{Z2 intro}, we first show the characterization of these matrices.\\

\hspace*{-0.65cm} Let $R = \KK[x_1, \ldots, x_d]$ be a polynomial ring over a field $\KK$ and a group $G = \{1, g \}$. Let $A = (a_{ij}) \in \Mat_{d \times d}(\KK)$ be the associated matrix of the action of $g$ on $R$, that is, 
$$
\begin{pmatrix}
a_{11} & a_{12} & \cdots & a_{1d} \\
a_{21} & a_{22} & \cdots & a_{2d} \\
\vdots & \vdots &  & \vdots \\
a_{d1} & a_{d2} & \cdots & a_{dd}
\end{pmatrix} \begin{pmatrix}
x_1 \\
x_2 \\
\vdots \\
x_d
\end{pmatrix} = \begin{pmatrix}
g(x_1) \\
g(x_2) \\
\vdots \\
g(x_d)
\end{pmatrix}
$$
Since $g^2=1$, we have that $A^2=I$. Thus, the eigenvalues of $A$ are $\{1, -1 \} \subseteq \KK$. Hence, $A$ is similar to a Jordan matrix
$$
\begin{pmatrix}
J_1 & & \\
& \ddots & \\
& & J_n
\end{pmatrix} \hspace*{0.5cm} \text{ where } \hspace*{0.5cm} J_i = \begin{pmatrix}
\lambda_i & 1 & 0 & \cdots \\
 & \lambda_i & 1 & \cdots \\
 & & \ddots \\ 
 &  & & \lambda_i
\end{pmatrix}
$$
and $\lambda_i \in \{1, -1\}$. Note that
$$
A^2 = \begin{pmatrix}
J_1^2 & & \\
& \ddots & \\
& & J_n^2
\end{pmatrix} \hspace{0.5cm} \text{and} \hspace*{0.5cm} J_i^2 = \begin{pmatrix}
\lambda_i^2 & 2 \lambda_i & 1 & \cdots \\
 & \lambda_i^2 & 2\lambda_i & \cdots \\
 & & \ddots \\ 
 &  & & \lambda_i^2
\end{pmatrix}.
$$
Given that $A^2 = I$, we have $J_i^2 = I$ for each $i$. Therefore, if $\Char(\KK) \neq 2$, then $\lambda_i = \pm 1$ and $J_i \in \Mat_{1 \times 1}(\KK)$. On the other hand, if $\Char(\KK) = 2$, then $\lambda_i=1$ and $J_i \in \Mat_{1 \times 1}(\KK)$ or $J_i \in \Mat_{2 \times 2}(\KK)$.\\

We assume that $\Char(\KK) \neq 2$. In this case, each $J_i \in \Mat_{1 \times 1}(\KK)$ and so $A$ is similar to a diagonal matrix
$$
\begin{pmatrix}
\lambda_1 & \\
& \lambda_2 \\
& & \ddots \\
& & & \lambda_d
\end{pmatrix}
$$
where $\lambda_i = \pm 1$. Rearranging if it is necessary, we choose $\lambda_i=1$ for $i=1, \ldots, j-1$ and $\lambda_i = -1$ for $i=j, \ldots, d$ for some $1 \leq j \leq d$. Hence, the action of $g$ is given by 
$$
\begin{array}{lcl} \vspace*{0.2cm}
g(x_i) = x_i & \text{ for all } & 1 \leq i \leq j-1 \\
g(x_i) = -x_i & \text{ for all } & j \leq i \leq d \\
\end{array}
$$

\newpage

We now assume that $\Char(\KK) = 2$. In this case, some $J_i \in \Mat_{1 \times 1}(\KK)$ and other are in $\Mat_{2 \times 2}(\KK)$. Thus, rearranging if it is necessary, $A$ is similar to a matrix of the form
\[ A \sim
\left(\begin{array}{@{}c|c@{}}
  \begin{matrix}
  1 & 0 & \cdots & 0 \\
  0 & 1 & \cdots & 0 \\
  \vdots & \vdots & \ddots & \vdots \\
  0 & 0 & \cdots & 1
  \end{matrix}
  & \bigzero \\
\hline
  \bigzero & \begin{array}{@{}c|c@{}}
  \begin{matrix}
  1 & 1 \\
  0 & 1
  \end{matrix}
  & \bigzero \\
\hline
  \bigzero & \begin{array}{@{}c|c@{}}
  		\begin{matrix}
  			1 & 1 \\
  			0 & 1
  		\end{matrix}
  		& \bigzero \\
		\hline
  		\bigzero & \begin{array}{@{}c|c@{}}
  				\ddots
  				& \bigzero \\
				\hline
  				\bigzero & 
  				\begin{matrix}
  					1 & 1 \\
  					0 & 1
  				\end{matrix}
  		\end{array}
  \end{array}
  \end{array}
\end{array}\right)
\]
Hence, the action of $g$ is given by 
$$
\begin{array}{lcl} \vspace*{0.2cm}
g(x_i) = x_i & \text{ for all } & 1 \leq i \leq j-1 \\\vspace*{0.2cm}
g(x_i) = x_i + x_{i+1} & \text{ for all } & i = j, j+2, \ldots, d-1 \\\vspace*{0.2cm}
g(x_i) = x_i & \text{ for all } & i = j+1, j+3, \ldots, d \\\vspace*{0.2cm}
\end{array}
$$

\begin{thm} \label{Z2}
Let $R = \KK[x_1, \ldots, x_d]$ be a polynomial ring over a field $\KK$ of characteristic different from $2$ and $G = \ZZ / 2\ZZ$. Then, $I_G^{(n)} = I_G^n$ for all $n \in \NN$.
\end{thm}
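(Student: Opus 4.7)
The plan is to reduce the claim to a combinatorial fact about monomial ideals via an invertible linear change of variables on $S$. Using the matrix analysis preceding the theorem statement, after a change of basis on $R$ we may assume the nontrivial element $g \in G$ acts by $g(x_i) = x_i$ for $1 \leq i \leq j-1$ and $g(x_i) = -x_i$ for $j \leq i \leq d$. Since $2$ is a unit in $\KK$, the substitution
\[
z_i := y_i - x_i \ \text{for}\ i < j, \qquad u_i := y_i - x_i, \ \ v_i := y_i + x_i \ \text{for}\ i \geq j,
\]
together with keeping $x_1, \ldots, x_{j-1}$, is an invertible linear change of variables on $S$. In the new coordinates, the two ideals whose intersection defines $I_G$ become monomial ideals:
\[
A := (y_1 - x_1, \ldots, y_d - x_d) = (z_1, \ldots, z_{j-1}, u_j, \ldots, u_d),
\]
\[
B := (y_1 - g(x_1), \ldots, y_d - g(x_d)) = (z_1, \ldots, z_{j-1}, v_j, \ldots, v_d).
\]

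Next I would compute $I_G = A \cap B$ by the standard lcm formula for monomial ideals, which (after removing redundancies) yields $I_G = (z_1, \ldots, z_{j-1}) + (u_i v_k : j \leq i, k \leq d)$. By Lemma \ref{intersection of symbolic powers}, $I_G^{(n)} = A^n \cap B^n$; and since $A$ and $B$ are generated by variables, $A^n \cap B^n$ is the monomial ideal of all monomials whose total degree in the $A$-variables $\{z_i, u_k\}$ is at least $n$ and whose total degree in the $B$-variables $\{z_i, v_k\}$ is at least $n$. It therefore suffices to prove $A^n \cap B^n \subseteq I_G^n$, the reverse containment being automatic.

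For this last step, let $m = x^\alpha z^\beta u^\gamma v^\delta$ be a monomial in $A^n \cap B^n$, so that $|\beta| + |\gamma| \geq n$ and $|\beta| + |\delta| \geq n$. If $|\beta| \geq n$ then $m$ is divisible by a product of $n$ of the $z_i$'s, hence $m \in (z_1, \ldots, z_{j-1})^n \subseteq I_G^n$. Otherwise, set $k := n - |\beta| > 0$; since $|\gamma|, |\delta| \geq k$, I can choose indices $i_1, \ldots, i_k$ from $\gamma$ and $k_1, \ldots, k_k$ from $\delta$ so that $u_{i_1} \cdots u_{i_k}$ divides $u^\gamma$ and $v_{k_1} \cdots v_{k_k}$ divides $v^\delta$. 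Then the product $z^\beta \cdot \prod_{s=1}^{k}(u_{i_s} v_{k_s})$ is a product of $n$ generators of $I_G$ that divides $m$, so $m \in I_G^n$. The only delicate step is the initial change of variables that simultaneously renders $A$ and $B$ monomial; once that is set up, everything reduces to monomial bookkeeping, and this first step is the main place where the hypothesis $\Char(\KK) \neq 2$ is used.
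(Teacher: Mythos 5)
Your argument is correct, and after the initial change of variables it diverges from the paper's proof in how the key containment is established. The paper makes the same characteristic-$\neq 2$ substitution ($a_i = y_i - x_i$, $b_i = y_i + x_i$), writes the resulting ideal as $J = J_1 + J_2$ with $J_1$ a regular sequence of variables and $J_2$ the edge ideal of a complete bipartite graph, and then invokes two external results: the summation formula $J^{(n)} = \sum_k J_1^{(n-k)}J_2^{(k)}$ for ideals in disjoint sets of variables \cite{BCGHJNSVV16}, and the equality of symbolic and ordinary powers for edge ideals of bipartite graphs \cite{GVV05}. You instead use only Lemma \ref{intersection of symbolic powers} to write $I_G^{(n)} = A^n \cap B^n$, observe that $A$ and $B$ are generated by variables in the new coordinates so that $A^n \cap B^n$ is the monomial ideal of monomials with $z,u$-degree and $z,v$-degree both at least $n$, and verify membership in $I_G^n$ by directly exhibiting, for each such monomial $x^\alpha z^\beta u^\gamma v^\delta$, a product of $|\beta|$ linear generators $z_i$ and $n - |\beta|$ quadratic generators $u_{i_s}v_{k_s}$ dividing it (the case $|\beta| \geq n$ being immediate). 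This bookkeeping is sound: $A^n \cap B^n$ is a monomial ideal, so checking its monomials suffices, and your chosen product has exactly $n$ factors from the generating set of $I_G$. What your route buys is a self-contained, elementary proof that in effect reproves the two cited results in this special case (a complete bipartite edge ideal plus a disjoint regular sequence of variables); what the paper's route buys is brevity given the references and a formulation of the intermediate ideal $J$ that is reused verbatim in the characteristic-$2$ case (Theorem \ref{Z22}), so if you adopted your version you would want to note that your monomial argument applies equally to the ideal obtained there.
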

\begin{proof}
Let $G = \{1, g \}$. By the characterization in the $\Char(\KK) \neq 2$ case, we have that $g(x_i) = x_i$ for all $i < j$ and $g(x_i) = -x_i$ for all $i \geq j$ where $1 \leq j \leq d$. Thus,
$$
I_G = (y_1 - x_1, \ldots , y_d - x_d ) \cap (y_1 - x_1, \ldots , y_{j-1} - x_{j-1}, y_j + x_j, \ldots , y_d + x_d ).
$$
Consider the polynomial rings $T = \KK[a_1, \ldots, a_d, b_1, \ldots, b_d]$ and $S = \KK[x_1, \ldots, x_d, y_1, \ldots, y_d]$ over the field $\KK$. We define the map $\varphi : T \rightarrow S $ as $\varphi(a_i) = y_i - x_i$ and $\varphi(b_i) = y_i + x_i$ for $i = 1, \ldots, d$. Likewise, we define the map $\phi : S \rightarrow T$ by $\phi(x_i) = \frac{b_i - a_i}{2}$ and $\phi(y_i) = \frac{b_i + a_i}{2}$ for $i=1, \ldots, d$. We observe that $\varphi$ and $\phi$ are inverse morphisms. Hence, $S$ and $T$ are isomorphic, which implies that $I_G = \varphi(J)$ where 
$$
\begin{array}{rcl} \vspace*{0.2cm}
J &=& (a_1, \ldots , a_d) \cap (a_1, \ldots , a_{j-1}, b_j, \ldots, b_d) \\\vspace*{0.2cm}
&=& (a_1, \ldots , a_{j-1}, a_jb_j, \ldots, a_j b_d, \ldots , a_db_j, \ldots , a_db_d) \\\vspace*{0.2cm}
&=& (a_1, \ldots , a_{j-1}) + (a_jb_j, \ldots, a_j b_d, \ldots , a_db_j, \ldots , a_db_d).
\end{array}
$$
Denote $J_1 = (a_1, \ldots , a_{j-1})$ and $J_2 = (a_jb_j, \ldots, a_j b_d, \ldots , a_db_j, \ldots , a_db_d)$. We claim that $J^{(n)} = J^n$ for every $n$. By Theorem 7.8 in \cite{BCGHJNSVV16} we have that
\begin{equation} \label{eq 3}
J^{(n)} = \displaystyle\sum_{k=0}^n J_1^{(n-k)} J_2^{(k)} = \displaystyle\sum_{k=0}^n J_1^{n-k} J_2^{(k)}
\end{equation}
because $J_1$ is generated by a reqular sequence. Given that
\begin{equation} \label{eq 4}
J^n =  \displaystyle\sum_{k=0}^n J_1^{n-k} J_2^{k}
\end{equation}
the (\ref{eq 3}) and $(\ref{eq 4})$ implies that $J^{(n)} = J^n$ if and only if $J_2^{(k)} = J_2^k$ for every $k \in \NN$.\\

\hspace*{-0.6cm} Let $G$ be a graph with vertices $\{a_j, \ldots, a_d, b_j, \ldots, b_d \}$ and edges $\{ \{a_l, b_m \} \ : \ {j \leq l, m \leq d} \}$. Observe that $G$ is a bipartite graph with edge ideal
$$
( \{ a_l b_m \ : \ j \leq l, m \leq d \} ) = (a_j b_j, \ldots, a_j b_d, \ldots, a_d b_j, \ldots, a_d b_d) = J_2.
$$
and so $J_2^{(k)} = J_2^k$ for all $k \in \NN$ because the symbolic powers and usual powers of an edge ideal of a bipartite graph coincide for any $k \in \NN$ \cite{GVV05}. Therefore, $J^{(n)} = J^n$. Hence, 
$$
I_G^{(n)} = \varphi(J)^{(n)} = \varphi(J^{(n)}) = \varphi(J^n) = \varphi(J)^n = I_G^n
$$
because $\varphi$ is an isomorphism.
\end{proof}

\vspace*{0.3cm}

\begin{thm} \label{Z22}
Let $R = \KK[x_1, \ldots, x_d]$ be a polynomial ring over a field $\KK$ of characteristic $2$ and $G = \ZZ / 2\ZZ$. Then, $I_G^{(n)} = I_G^n$ for all $n \in \NN$.
\end{thm}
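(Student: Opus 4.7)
The strategy is to mirror the proof of Theorem \ref{Z2}: construct an isomorphism of polynomial rings $\varphi: T \to S$ under which $I_G$ takes the form $\varphi(C + (A \cap B))$, where $C$ is generated by a regular sequence and $A \cap B$ is the edge ideal of a complete bipartite graph, and then combine \cite[Theorem 7.8]{BCGHJNSVV16} with \cite{GVV05} to conclude.

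First, by the characterization derived just above the statement, in characteristic $2$ I may assume, after a linear change of basis on $R$, that the non-identity element $g$ fixes $x_i$ for $i < j$, sends $x_{i_\ell} \mapsto x_{i_\ell} + x_{i_\ell+1}$ and fixes $x_{i_\ell+1}$ for each of the $k = (d-j+1)/2$ Jordan blocks (here $i_\ell = j + 2(\ell - 1)$, $\ell = 1, \ldots, k$). Hence the two prime ideals whose intersection is $I_G$ share the generator $y_i - x_i$ at every index $i \notin \{j, j+2, \ldots, d-1\}$ and differ only at the Jordan-block heads, where the first ideal contains $y_{i_\ell} - x_{i_\ell}$ and the second contains $y_{i_\ell} - x_{i_\ell} - x_{i_\ell+1}$.

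Next, let $T$ be the polynomial ring on variables $\{x_i : i \notin \{j+1, j+3, \ldots, d\}\} \cup \{b_1, \ldots, b_k\} \cup \{c_i : i \notin \{j, j+2, \ldots, d-1\}\} \cup \{a_1, \ldots, a_k\}$. Define $\varphi : T \to S$ by $x_i \mapsto x_i$, $c_i \mapsto y_i - x_i$, $a_\ell \mapsto y_{i_\ell} - x_{i_\ell}$, and $b_\ell \mapsto y_{i_\ell} - x_{i_\ell} - x_{i_\ell+1}$. The inverse sends $x_{i_\ell+1} \mapsto a_\ell - b_\ell$, $y_{i_\ell} \mapsto a_\ell + x_{i_\ell}$, and $y_i \mapsto c_i + x_i$ for the remaining $i$, so $\varphi$ is an isomorphism of polynomial rings. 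Under $\varphi$ one has
\[
(y_1 - x_1, \ldots, y_d - x_d) = \varphi(C + A), \qquad (y_1 - g(x_1), \ldots, y_d - g(x_d)) = \varphi(C + B),
\]
where $C = (c_i : i \notin \{j, j+2, \ldots, d-1\})$, $A = (a_1, \ldots, a_k)$, and $B = (b_1, \ldots, b_k)$, each generated by pairwise disjoint subsets of the variables of $T$. A straightforward monomial-ideal computation then yields
\[
I_G = \varphi(C + (A \cap B)) = \varphi\bigl(C + (a_\ell b_m : 1 \leq \ell, m \leq k)\bigr),
\]
and the second summand is precisely the edge ideal of the complete bipartite graph $K_{k,k}$.

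The remainder of the argument is a transcript of the final portion of the proof of Theorem \ref{Z2}: since $C$ is generated by a regular sequence (disjoint variables), \cite[Theorem 7.8]{BCGHJNSVV16} reduces the equality to $(A \cap B)^{(l)} = (A \cap B)^l$ for all $l$, which holds by \cite{GVV05} because $A \cap B$ is a bipartite edge ideal. Applying $\varphi$ returns $I_G^{(n)} = I_G^n$. The main technical point is getting the change of coordinates right; in particular the substitution $x_{i_\ell+1} \mapsto a_\ell - b_\ell$ plays the role that $x_i \mapsto (b_i - a_i)/2$ played in characteristic $\neq 2$, and is the only genuinely new ingredient needed to carry the whole strategy of Theorem \ref{Z2} across to characteristic $2$.
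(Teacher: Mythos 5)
Your proposal is correct and follows essentially the same route as the paper: after the Jordan-form characterization, you perform a linear change of coordinates so that $I_G$ becomes the intersection of two coordinate primes sharing some variables, decompose it as a regular sequence plus the edge ideal of a complete bipartite graph, and conclude via \cite[Theorem 7.8]{BCGHJNSVV16} and \cite{GVV05}, exactly as in the proof of Theorem \ref{Z2} to which the paper's own proof of Theorem \ref{Z22} reduces. The only difference is presentational: you spell out the isomorphism $\varphi$ and its inverse explicitly, whereas the paper introduces the new linear forms and renames them before citing the computation already done for Theorem \ref{Z2}.
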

\begin{proof}
Let $G = \{1, g \}$. By the characterization in $\Char(\KK)=2$ case, we have that $g(x_i) = x_i$ for all $i = 1, \ldots, j-1, j+1, j+3, \ldots, d$ and $g(x_i) = x_i + x_{i+1}$ for all $i = j, j+2, \ldots, d-1$ for some $j \in \{1, \ldots, d\}$. Thus, 
$$
\begin{array}{rcl} \vspace*{0.2cm}
I_G &=& (y_1 - x_1, \ldots , y_d - x_d ) \cap (y_1 - x_1, \ldots , y_{j-1} - x_{j-1}, y_j - x_j - x_{j+1}, y_{j+1} - x_{j+1}, \\\vspace*{0.2cm}
& & \hspace*{3cm} y_{j+2} - x_{j+2} - x_{j+3}, y_{j+3} - x_{j+3}, \ldots , y_{d-1} - x_{d-1} - x_d, y_d - x_d ).
\end{array}
$$
Consider the variables $a_i = y_i - x_i$ for $i=1, \ldots, j-1$, $a'_i = y_i - x_i$ for $i=j, j+1, \ldots, d$, and $b'_i = y_i - x_i - x_{i+1}$ for $i = 1, \ldots, d-1$. Observe that this variables are algebraically independent. Thus, we can extend it to a basis of $R$. Therefore, we can write the Derksen ideal in this basis as  
$$
\begin{array}{rcl} \vspace*{0.3cm}
I_G &=& (a_1, \ldots , a_{j-1}, a'_{j}, \ldots, a'_d) \cap (a_1, \ldots , a_{j-1}, b'_j, a'_{j+1}, b'_{j+2}, a'_{j+3}, \ldots , b'_{d-1}, a'_d) 
\end{array}
$$
We rename the variables as follows
$$
\begin{array}{rclcrclcrcl} \vspace*{0.2cm}
a_j &=& a'_{j+1} & \hspace*{1cm} & a_{l+1} &=& a'_{j} & \hspace*{1cm} & b_{l+1} &=& b'_{j} \\
a_{j+1} &=& a'_{j+3} & &  a_{l+2} &=& a'_{j+2} & & b_{l+2} &=& b'_{j+2} \\
& \vdots & & & & \vdots & & & & \vdots & \\\vspace*{0.2cm}
a_l &=& a'_{d} & & a_d &=& a'_{d-1} & & b_d &=& b'_{d-1}
\end{array}
$$
Hence, we write $I_G$ as
$$
\begin{array}{rcl}
I_G &=& (a_1, \ldots , a_d) \cap (a_1, \ldots , a_l, b_{l+1}, \ldots, b_d).
\end{array}
$$
Notice that the ideal $I_G$ is the same as ideal $J$ in the previous proof. Therefore, $I_G^{(n)} = I_G^n$.
\end{proof}

\vspace*{0.2cm}

\begin{thm} \label{diagonal}
Let $R = \KK[x_1, \ldots, x_d]$ be a polynomial ring over an algebraically field $\KK$ and $G = \langle g_1, \ldots, g_a \rangle$ be a finite group with $a \leq d$ which acts linearly in $R$ of the following way
$$
g_i(x_j) = 
  \begin{cases}
    x_j   & \quad \text{if } j \neq i \\
    \omega_{i}x_j    & \quad \text{if } j=i , \\
  \end{cases}
$$
where $\omega_{i}$ is a $d_i$-root of unity, that is $\omega_{i}^{d_i} = 1$ with $d_i \geq 1$. Then, $I_G^{(n)} = I_G^n$ for every $n \in \NN$.
\end{thm}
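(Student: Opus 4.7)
The plan is to show that $I_G$ is itself a complete intersection, at which point the conclusion follows immediately from the regular-sequence case cited in the proof of Lemma~\ref{intersection of symbolic powers} (Hochster \cite{Hoc73}).

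Because each $g_i$ acts only on the variable $x_i$, the generators $g_1,\ldots,g_a$ commute and $G \cong \prod_{i=1}^a \ZZ/d_i\ZZ$. An element $g = g_1^{e_1}\cdots g_a^{e_a}$ with $(e_1,\ldots,e_a) \in \prod_i \ZZ/d_i\ZZ$ gives
\[
P_g = (y_1 - \omega_1^{e_1} x_1,\ldots, y_a - \omega_a^{e_a} x_a,\; y_{a+1} - x_{a+1},\ldots, y_d - x_d),
\]
so $I_G = \bigcap_{g\in G} P_g$. My candidate description of $I_G$ is
\[
J := (y_1^{d_1} - x_1^{d_1},\ldots, y_a^{d_a} - x_a^{d_a},\; y_{a+1} - x_{a+1}, \ldots, y_d - x_d),
\]
whose $d$ generators involve pairwise disjoint pairs of variables and therefore form a regular sequence in $S$; in particular $J$ is a complete intersection.

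The heart of the argument is proving $J = I_G$. The containment $J \subseteq I_G$ is immediate from the factorization $y_i^{d_i} - x_i^{d_i} = \prod_{e=0}^{d_i-1}(y_i - \omega_i^e x_i)$, which places every generator of $J$ inside each $P_g$. For the reverse containment, I would work with the tensor-product decomposition
\[
S/J \;\cong\; \bigotimes_{i=1}^a \KK[x_i,y_i]/(y_i^{d_i} - x_i^{d_i}) \;\otimes\; \bigotimes_{j=a+1}^d \KK[x_j,y_j]/(y_j - x_j).
\]
Each factor is reduced, since $y_i^{d_i} - x_i^{d_i}$ is squarefree in the UFD $\KK[x_i,y_i]$, and because $\KK$ is algebraically closed the tensor product over $\KK$ remains reduced; hence $J$ is radical. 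The same decomposition shows that the minimal primes of $S/J$ correspond bijectively to tuples $(e_1,\ldots,e_a) \in \prod_i \ZZ/d_i\ZZ$, i.e. to choosing one linear factor of each $y_i^{d_i} - x_i^{d_i}$, which recovers precisely the collection $\{P_g : g \in G\}$. Therefore $J = \bigcap_g P_g = I_G$.

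Once $I_G = J$ is identified as an ideal generated by a regular sequence, Hochster's result \cite{Hoc73} yields $I_G^{(n)} = I_G^n$ for every $n \geq 1$. I expect the main obstacle to be the radicality of $J$: the containment $J \subseteq I_G$ is formal, but recognizing $J$ as exactly the intersection $\bigcap_g P_g$ requires the tensor-decomposition analysis above and, in particular, relies on $\KK$ being algebraically closed to rule out extra nilpotents in $S/J$.
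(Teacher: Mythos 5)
Your proof is correct, and it lands on the same candidate ideal $J=(y_1^{d_1}-x_1^{d_1},\ldots,y_a^{d_a}-x_a^{d_a},y_{a+1}-x_{a+1},\ldots,y_d-x_d)$ and the same endgame (a radical complete intersection, hence $I_G^{(n)}=I_G^n$) as the paper, but you establish $I_G=J$ by a genuinely different route. The paper first computes the invariant ring $R^G=\KK[x_1^{d_1},\ldots,x_a^{d_a},x_{a+1},\ldots,x_d]$ and invokes Theorem~\ref{theo equality Derksen ideal and AG} to conclude $I_G=\sqrt{J}$, then proves radicality of $J$ by induction on $a$, adjoining one equation $y_a^{d_a}-x_a^{d_a}$ at a time; you instead work directly with the defining intersection $I_G=\bigcap_{g}P_g$: the factorization $y_i^{d_i}-x_i^{d_i}=\prod_{e}(y_i-\omega_i^{e}x_i)$ gives $J\subseteq I_G$ at once, radicality of $S/J$ comes from your tensor decomposition plus the fact that a tensor product of reduced algebras over an algebraically closed (or perfect) field is reduced, and every prime containing $J$ must contain one linear factor of each $y_i^{d_i}-x_i^{d_i}$, so the minimal primes of $J$ are exactly the $P_g$ and $J=\sqrt{J}=\bigcap_g P_g=I_G$. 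What your route buys: it bypasses both the computation of $R^G$ and the Nullstellensatz-based Theorem~\ref{theo equality Derksen ideal and AG}, and since each $y_i^{d_i}-x_i^{d_i}$ already splits into distinct linear factors over $\KK$, your factors are geometrically reduced and the minimal-prime step is purely ideal-theoretic, so algebraic closedness is not really essential to your argument, whereas it is to the paper's; the paper's route, in exchange, exhibits the invariant ring explicitly, which is of independent interest here. One caveat you share with the paper: the distinctness of the factors $y_i-\omega_i^{e}x_i$ (equivalently squarefreeness of $y_i^{d_i}-x_i^{d_i}$) and your identification $G\cong\prod_i\ZZ/d_i\ZZ$ implicitly assume each $\omega_i$ is a \emph{primitive} $d_i$-th root of unity (in particular $\Char\KK\nmid d_i$), while the hypothesis only states $\omega_i^{d_i}=1$; both proofs should really replace $d_i$ by the multiplicative order of $\omega_i$.
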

\begin{proof}
Note that $x_1^{d_1}, \ldots, x_a^{d_a}, x_{a+1}, ..., x_d \in R^G$, because
$$
g_i(x_j^{d_j}) = (g_i(x_j))^{d_j} = x_j^{d_j}
$$
for $j=1, \ldots, a$ with $i \neq j$, $g_i(x_j) = x_j$ for $j=a+1, \ldots, d$, and 
$$
g_i(x_i^{d_i}) = (g_i(x_i))^{d_i} = (\omega_{i} x_i)^{d_i} = \omega_{i}^{d_i} x_i^{d_i} = x_i^{d_i}. \vspace*{0.2cm}
$$
We claim that $R^G = \KK[x_1^{d_1}, x_2^{d_2}, \ldots, x_a^{d_a}, x_{a+1}, \ldots, x_d]$. Given that the action of $G$ is linear, $g_i(f)$ does not modify the exponents of the variables on any polynomial $f \in R$, it only multiply a scalar to each monomial of $f$. Thus, it is enough to consider $f \in R^G$ a monomial in order to show the equality. Let $f = x_1^{\beta_1} x_2^{\beta_2} \cdots x_a^{\beta_a} x_{a+1}^{\beta_{a+1}} \cdots x_d^{\beta_d} \in R^G$ with $\beta_i \in \NN$. For each $i=1, \ldots, a$ we have
$$
g_i(f) = x_1^{\beta_1} x_2^{\beta_2} \cdots \omega_{i}^{\beta_i} x_i^{\beta_i} \cdots  x_{a}^{\beta_a} x_{a+1}^{\beta_{a+1}} \cdots x_d^{\beta_d}.
$$
Since $g_i(f)=f$, we have that $\omega_i^{\beta_i}=1$ and so $\beta_i$ is divisible by $d_i$ for each $i=1, \ldots, a$. Thus, $R^G = \KK[x_1^{d_1}, x_2^{d_2}, \ldots, x_a^{d_a}, x_{a+1}, \ldots, x_d]$. Therefore, by Theorem (\ref{theo equality Derksen ideal and AG}) we obtain that
$$
\begin{array}{rcl} \vspace*{0.3cm}
I_G &=& \sqrt{(\{f(y) - f(x) \ | \ f \in R^G\})} \\
&=& \sqrt{(y_1^{d_1} - x_1^{d_1}, y_2^{d_2} - x_2^{d_2}, \ldots, y_a^{d_a} - x_a^{d_a}, y_{a+1} - x_{a+1}, \ldots, y_d - x_d)}.
\end{array}
$$
Denote $(y_1^{d_1} - x_1^{d_1}, y_2^{d_2} - x_2^{d_2}, \ldots, y_a^{d_a} - x_a^{d_a}, y_{a+1} - x_{a+1}, \ldots, y_d - x_d)$ by $J_a$. We claim that $J_a$ is radical for any $a \in \NN$. We proceed by induction on $a$. If $a = 0$, then ${\large \sfrac{S}{J_0}} \cong \KK[x_1, \ldots, x_d]$ which is a domain. This implies that $J_0$ is prime, so it is radical.\\
We assume that $a = 1$. Then,
$$
{\large \sfrac{S}{J_1} \cong \sfrac{\KK[x_2, \ldots, x_d][x_1, y_1]}{(y_1^{d_1} - x_1^{d_1})}.}
$$
Note that $y_1^{d_1} - x_1^{d_1}$ is equal to the product of irreducible polynomials of multiplicity one. This implies that $(y_1^{d_1} - x_1^{d_1})$ is radical in $\KK[x_1, \ldots, x_d, y_1]$. Then, ${\large \sfrac{\KK[x_1, \ldots, x_d, y_1]}{(y_1^{d_1} - x_1^{d_1})}}$ is reduced and so ${\large \sfrac{S}{J_1}}$. Therefore, $J_1$ is radical.\\

\hspace*{-0.65cm} We now assume that $J_{a-1}$ is radical. Note that \\
$$
\begin{array}{rcl} \vspace*{0.3cm}
\sfrac{S}{J_a} & \cong & {\large\sfrac{\KK[x_1, \ldots, x_d, y_1 , \ldots , y_a]}{(y_1^{d_1} - x_1^{d_1}, \ldots , y_{a-1}^{d_{a-1}} - x_{a-1}^{d_{a-1}}, y_a^{d_a} - x_a^{d_a})} }\\\vspace*{0.3cm}
& \cong & {\large \sfrac{ \left(\sfrac{\KK[x_1, \ldots, x_{a-1}, x_{a+1}, \ldots, x_d, y_1, \ldots, y_{a-1}]}{(y_1^{d_1} - x_1^{d_1}, ..., y_{a-1}^{d_{a-1}} - x_{a-1}^{d_{a-1}})} \right) [x_a, y_a]}{ (y_a^{d_a} - x_a^{d_a})}}
\end{array}
$$
By induction hypothesis $\sfrac{\KK[x_1, \ldots, x_{a-1}, x_{a+1}, \ldots, x_d, y_1, \ldots, y_{a-1}]}{(y_1^{d_1} - x_1^{d_1}, ..., y_{a-1}^{d_{a-1}} - x_{a-1}^{d_{a-1}})}$ is reduced. Then, by case $a=1$, we conclude that $\sfrac{\KK[x_1, \ldots, x_d, y_1 , \ldots y_a]}{(y_1^{d_1} - x_1^{d_1}, \ldots , y_{a-1}^{d_{a-1}} - x_{a-1}^{d_{a-1}}, y_a^{d_a} - x_a^{d_a})}$ is reduced. Therefore, $J_a$ is radical.\\

\hspace*{-0.65cm} Hence, $I_G = J_a$. Since $J_a$ is generated by a regular sequence, we obtain that $I_G^{(n)} = I_G^n$ for every $n \in \NN$.
\end{proof}

\newpage

\section{Symbolic Powers of Derksen Ideals Locally}

\vspace*{0.2cm}

\hspace*{-0.6cm} We assume that $\KK$ is algebraically closed and we denote $V = \KK^d$. We define the separating variety as $S_{V,G} = \{(u, v) \in V \times V \ | \ f(u) = f(v) \ \ \text{for all } f \in R^G \}$. By Theorem \ref{theo equality Derksen ideal and AG} we have that $I_G = \II(S_{V,G})$. We denote $J_g = (y_1 - g(x_1), \ldots, y_d - g(x_d))$ for each $g \in G$ and $L_g = \VV(J_g) = (1 \otimes g)(V) = \{(u,g(u)) \ | \ u \in V \}$.

\vspace*{0.2cm}

\begin{lemma}[{{{\cite[Lemma 2.2]{DJ15}}}}]
Let $G$ be a finite group acting linearly on $R$. If $g,h \in G$, then 
$$
(1 \otimes g)(V) \cap (1 \otimes h)(V) = (1 \otimes h)(V^{h^{-1}g}) ,
$$
where $V^{h^{-1}g} =  \{ u \in V  \ | \ h^{-1}g(u) = u \}$.
\end{lemma}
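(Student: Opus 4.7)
The plan is to unwind the definition $(1 \otimes g)(V) = \{(u, g(u)) \mid u \in V\}$, so that the left-hand side is the set of pairs $(u,v) \in V \times V$ admitting two simultaneous descriptions $(u,v) = (a, g(a))$ and $(u,v) = (b, h(b))$ for some $a, b \in V$. The whole argument is then a short coordinate chase.

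For the inclusion $\subseteq$, I would compare the two coordinates: equality of first coordinates forces $a = b = u$, and equality of second coordinates becomes $g(u) = h(u)$, which is precisely $h^{-1}g(u) = u$, i.e.\ $u \in V^{h^{-1}g}$. The common second coordinate can then be written as $h(u)$, so $(u, v) = (u, h(u))$ visibly belongs to $(1 \otimes h)(V^{h^{-1}g})$.

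For the reverse inclusion, I would pick $u \in V^{h^{-1}g}$; by definition $h^{-1}g(u) = u$, so $g(u) = h(u)$, and therefore $(u, h(u)) = (u, g(u))$ exhibits the point as lying both in $(1 \otimes g)(V)$ and in $(1 \otimes h)(V)$, hence in their intersection.

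Since the statement is purely formal---essentially the computation of the fiber product of the two graphs over the diagonal $V \hookrightarrow V \times V$---I do not anticipate any real obstacle. The only things to be careful about are the convention that $1 \otimes g$ denotes the map $u \mapsto (u, g(u))$ (not $(g(u), u)$) and recording the fixed-locus condition as $V^{h^{-1}g}$ rather than $V^{g^{-1}h}$, so that the output has the asserted form $(1 \otimes h)(\,\cdot\,)$ rather than $(1 \otimes g)(\,\cdot\,)$.
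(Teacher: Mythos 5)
Your argument is correct and complete: the lemma is purely the statement that two graphs $\{(u,g(u))\}$ and $\{(u,h(u))\}$ intersect exactly where $g(u)=h(u)$, i.e.\ over the fixed locus $V^{h^{-1}g}$, and your two inclusions verify this directly. The paper itself gives no proof (it cites \cite[Lemma 2.2]{DJ15}), and your elementary coordinate chase is exactly the expected argument, with the right attention paid to the convention $1\otimes g\colon u\mapsto(u,g(u))$ and to writing the fixed locus as $V^{h^{-1}g}$ so the result lands in $(1\otimes h)(\,\cdot\,)$.
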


\vspace*{0.2cm}

\begin{prop}\label{characterization}
Let $G$ be a nontrivial finite group which acts linearly on $R = \KK[x_1, \ldots, x_d]$. Then every element $g \neq e$ fixes only the origin if and only if $\Sing(S/I_G) = \{ \fn \}$.
\end{prop}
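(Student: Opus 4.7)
The plan is to describe $\Sing(S/I_G)$ geometrically via the decomposition $\VV(I_G) = \bigcup_{g \in G} L_g$, and then apply the preceding lemma to translate the question into a statement about fixed-point sets of elements of $G$.

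First, since $G$ acts linearly, each $L_g = \{(u, g(u)) : u \in V\}$ is the graph of a linear automorphism of $V$ and is therefore a smooth $d$-dimensional linear subspace of $V \times V$. Distinct group elements $g \neq h$ yield distinct linear subspaces $L_g \neq L_h$, since each is the graph of a different map. Because $I_G$ is an intersection of prime ideals, it is radical, and so $S/I_G$ is reduced with underlying variety $\VV(I_G) = \bigcup_{g \in G} L_g$. At a closed point $p \in L_{g_0}$ that lies on no other $L_g$, the closed set $\bigcup_{g \neq g_0} L_g$ avoids $p$, so in a Zariski neighborhood of $p$ the variety $\VV(I_G)$ coincides with the smooth linear subspace $L_{g_0}$ and $p$ is a smooth point. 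Conversely, if $p \in L_g \cap L_h$ with $g \neq h$, then the tangent space $T_p \VV(I_G)$ contains the two distinct $d$-planes $T_p L_g$ and $T_p L_h$, so $\dim T_p \VV(I_G) > d = \dim \VV(I_G)$ and $p$ is singular. Therefore, on closed points,
$$
\Sing(\VV(I_G)) = \bigcup_{g \neq h} L_g \cap L_h.
$$

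Next I apply the cited lemma: $L_g \cap L_h = \{(u, h(u)) : u \in V^{h^{-1}g}\}$, which reduces to the origin $\{(0,0)\}$ precisely when $V^{h^{-1}g} = \{0\}$. As $(g,h)$ ranges over ordered pairs of distinct elements of $G$, the element $k = h^{-1}g$ ranges over all of $G \setminus \{e\}$. Hence $\Sing(\VV(I_G)) = \{0\}$ if and only if $V^k = \{0\}$ for every $k \neq e$, i.e., every nontrivial element of $G$ fixes only the origin. Because $G$ is nontrivial there exist $g \neq h$ in $G$, and $L_g$ and $L_h$ both pass through the origin, so the origin is always singular; the substance of the biconditional is whether any other point is singular. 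Finally, passing from the variety back to $\Spec$: $\Sing(S/I_G)$ is a closed subset of $\Spec(S)$ and, since $\KK$ is algebraically closed and $S/I_G$ is reduced, it is determined by its closed points, which correspond bijectively (via the Nullstellensatz) to the singular points of $\VV(I_G)$. Thus $\Sing(S/I_G) = \{\fn\}$ is equivalent to $\Sing(\VV(I_G)) = \{0\}$, completing the equivalence.

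The step I expect to require the most care is the smoothness assertion at a point $p$ lying on a unique component $L_{g_0}$: one must verify that the reduced scheme $\VV(I_G)$ agrees locally with the smooth subspace $L_{g_0}$. This is handled by the elementary observation that finitely many closed sets not containing $p$ can be simultaneously excluded by a Zariski-open neighborhood of $p$, after which $\VV(I_G)$ restricts to $L_{g_0}$. Everything else is bookkeeping between $V \times V$ and $\Spec(S)$.
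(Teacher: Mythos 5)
Your argument is correct and rests on the same skeleton as the paper's proof: the decomposition $\VV(I_G)=\bigcup_{g\in G}L_g$ together with \cite[Lemma 2.2]{DJ15} to convert the pairwise intersections $L_g\cap L_h$ into fixed spaces $V^{h^{-1}g}$. Where you differ is in how the singular locus is identified. You work at closed points of the variety: a tangent-space count ($T_p$ contains two distinct $d$-planes, so $\dim T_p>d=\dim$) shows points on two components are singular, local coincidence with a single smooth $L_{g_0}$ shows the remaining points are smooth, and you then pass from closed points to all primes via closedness of $\Sing(S/I_G)$ and the Jacobson/Nullstellensatz dictionary. The paper instead argues directly at an arbitrary prime $\fp$: if $\fp$ contains exactly one $J_g$, then $I_GS_\fp=J_gS_\fp$ is generated by variables, hence $(S/I_G)_\fp$ is regular; if $\fp$ contains two distinct $J_g,J_h$, then $(S/I_G)_\fp$ has two minimal primes, so it is not a domain and not regular; and $\fn$ itself is singular since otherwise $S/I_G$ would be regular, forcing $G$ trivial. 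The paper's route avoids your extra ``closed points suffice'' step (and the appeal to closedness of the singular locus), while yours is more geometric; both ultimately use that $\KK$ is algebraically closed (you for the closed-point dictionary, the paper to pass from $\VV(J_g+J_h)=\{0\}$ to $J_g+J_h=\fn$).

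One shared caveat worth flagging: both your proof and the paper's implicitly use that $g\neq h$ gives $L_g\neq L_h$ (equivalently $J_g\neq J_h$), i.e., that the action is faithful. Under the forward hypothesis this is automatic, since $h^{-1}g\neq e$ then fixes only the origin; but in the converse direction it is a genuine standing assumption: if, say, $G=\ZZ/4\ZZ$ acts on $\KK[x]$ through the sign representation, then $\Sing(S/I_G)=\{\fn\}$ while $g^2\neq e$ fixes all of $V$, and your identity $\Sing(\VV(I_G))=\bigcup_{g\neq h}L_g\cap L_h$ fails because coincident components contribute no singularities. Since the paper's step ``$J_g,J_h$ are minimal primes, so $(S/I_G)_\fp$ is not a domain'' needs the same distinctness, you are at parity with the source, but stating faithfulness explicitly would tighten your write-up.
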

\begin{proof}
We assume that each $g \neq e$ fixes only the origin. This is equivalent to the fixed set of $h^{-1}g$, $V^{h^{-1}g}$ to be $\{0\}$ for all $h \neq g$. Therefore, 
$$
\VV(J_g + J_h) = \VV(J_g) \cap \VV(J_h) = (1 \otimes g)(V) \cap (1 \otimes h)(V) = (1 \otimes h)(V^{h^{-1}g}) = (1 \otimes h)(\{0\}) = \{0\}.
$$
Hence $\VV(J_g + J_h) =  \{0\} = \VV(\fn)$. Thus, $J_g + J_h = \fn$, because $J_g + J_h$ is a radical ideal.\\
Let $\fp$ be a prime ideal different from $\fn$. If $J_g \subseteq \fp$, then $\fp$ does not contain $J_h$ for any $h \neq g$; otherwise, $\fp = \fn$. Therefore, $I_GS_\fp = J_gS_\fp$ which is generated by variables. Hence $\fp \not\in \Sing(S/I_G)$. If $\fp$ does not contain any $J_g$, then $I_GS_\fp = S_\fp$ and so $\fp \not\in \Sing(S/I_G)$. Thus, $\Sing(S/I_G) \subseteq \{\fn\}$. If $\fn \not\in \Sing(S/I_G)$ we have that $S/I_G$ is a regular ring which implies that $G$ is trivial. Then, $\fn \in \Sing(S/I_G)$ and so $\Sing(S/I_G) = \{ \fn \}$.

\hspace*{-0.6cm} Conversely, we assume that $\Sing(S/I_G) = \{ \fn \}$. We claim that $\VV(J_g + J_h) \subseteq \Sing(S/I_G)$. Let $\fp$ be a prime ideal such that $J_g \subseteq \fp$ and $J_h \subseteq \fp$. We have that
$$
\begin{array}{rcl}
\Spec((S/I_G)_\fp) &\cong & \Spec(S/I_G) \cap \{ \text{primes that are contained in } \fp \} \\
&\cong & \VV(I_G) \cap \{ \text{primes that are contained in } \fp \} .
\end{array}
$$
We already know that $J_g, J_h \in \VV(I_G)$ and by hypothesis they are contained in $\fp$. In addition, they are minimal primes different from $0$. Therefore, $(S/I_G)_\fp$ is not a domain which implies it is not regular, because every local regular ring is a domain. Therefore, $\fp \in \Sing(S/I_G)$ and so the claim is done. Therefore, $\VV(J_g + J_h ) = \{ \fn \}$. \\
Hence, $\{0 \} = \{\fn\} = \VV(J_g + J_h) = \VV(J_g) \cap \VV(J_h) = L_g \cap L_h = (1 \otimes h)(V^{h^{-1}g})$ which implies that $h^{-1}g(u) = u$ only for $u=0$ for all $g \neq h$. Therefore, each $g \neq e$ fixes only the origin.
\end{proof}

\vspace*{0.2cm}

\begin{thm} \label{equality characterization}
Let $R = \KK[x_1, \ldots, x_d]$ be a polynomial ring over a field $\KK$ and $G$ be a group in which each element $g \neq e$ fixes only the origin. Then $I_G^{(n)} S_\fp = I_G^n S_\fp$ for all $n \in \NN$ and for all prime ideals $\fp$ different from $\mathfrak{n}$.
\end{thm}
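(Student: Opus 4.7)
The plan is to combine the characterization just established in Proposition \ref{characterization} with the description of the symbolic power from Lemma \ref{intersection of symbolic powers}, namely
$$I_G^{(n)} = \bigcap_{g \in G} J_g^{\,n}, \qquad J_g = (y_1 - g(x_1), \ldots, y_d - g(x_d)),$$
and to show that after localizing at any $\fp \neq \fn$, all but at most one of the factors $J_g^{\,n}$ becomes the unit ideal.

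First I would fix a prime $\fp \neq \fn$ and split into cases according to whether $I_G \subseteq \fp$. If $I_G \not\subseteq \fp$, then $I_G^{\,n} S_\fp = I_G^{(n)} S_\fp = S_\fp$ and there is nothing to prove, since $I_G^{\,n} \subseteq I_G^{(n)}$ always. Otherwise $I_G \subseteq \fp$, and since $I_G = \bigcap_{g \in G} J_g$ is contained in $\fp$ with $\fp$ prime, at least one $J_g$ is contained in $\fp$ (using that the intersection contains the product). The key step is to show that exactly one such $g$ exists. Suppose for contradiction that $J_g, J_h \subseteq \fp$ for some $g \neq h$. In the proof of Proposition \ref{characterization} it was established under our hypothesis that $J_g + J_h = \fn$ whenever $g \neq h$. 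Hence $\fn \subseteq \fp$, forcing $\fp = \fn$, a contradiction.

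With uniqueness in hand, let $g \in G$ be the unique element with $J_g \subseteq \fp$. For every $h \neq g$ we have $J_h \not\subseteq \fp$, so $J_h S_\fp = S_\fp$ and consequently $J_h^{\,n} S_\fp = S_\fp$. Because localization commutes with finite intersections,
$$I_G^{(n)} S_\fp \;=\; \bigcap_{h \in G} J_h^{\,n} S_\fp \;=\; J_g^{\,n} S_\fp \;\cap\; \bigcap_{h \neq g} S_\fp \;=\; J_g^{\,n} S_\fp.$$
The same reasoning applied at the level of $I_G$ itself yields $I_G S_\fp = J_g S_\fp$, and taking $n$-th powers gives $I_G^{\,n} S_\fp = J_g^{\,n} S_\fp$. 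Combining these two equalities completes the proof.

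I do not expect a real obstacle here; the work was already done in Proposition \ref{characterization}, which supplies the crucial relation $J_g + J_h = \fn$ for distinct $g, h$. The only subtle point is making sure to invoke that $I_G \subseteq \fp$ prime forces some $J_g \subseteq \fp$ (the intersection-vs-product step), and to justify commuting localization with the finite intersection defining $I_G^{(n)}$; both are standard.
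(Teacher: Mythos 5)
Your proof is correct, but it takes a genuinely different route from the paper. The paper argues via the statement of Proposition \ref{characterization}: since $\Sing(S/I_G)=\{\fn\}$, for any prime $\fp\neq\fn$ containing $I_G$ the ring $(S/I_G)_\fp$ is regular, so $I_GS_\fp$ is a complete intersection; then it invokes the fact that powers of complete intersections are unmixed, hence $I_G^nS_\fp$ has no embedded primes and must agree with $I_G^{(n)}S_\fp$. You instead localize the decomposition $I_G^{(n)}=\bigcap_{g}J_g^n$ of Lemma \ref{intersection of symbolic powers} directly: for $\fp\neq\fn$ at most one $J_g$ lies in $\fp$ (by the relation $J_g+J_h=\fn$ for $g\neq h$), so both $I_G^{(n)}S_\fp$ and $I_G^nS_\fp$ collapse to $J_g^nS_\fp$ (or to $S_\fp$). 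Your argument is more elementary and self-contained, avoiding the machinery of regular local rings and unmixedness of powers of complete intersections, and it makes the local structure $I_GS_\fp=J_gS_\fp$ explicit; the paper's argument is more conceptual, applies verbatim to any ideal that is locally a complete intersection off a zero-dimensional singular locus, and directly yields the remark following the theorem that $\A(I_G)\setminus\Min(I_G)\subseteq\Sing(S/I_G)$. One small point of hygiene: you cite the identity $J_g+J_h=\fn$ from \emph{inside} the proof of Proposition \ref{characterization} rather than from its statement; this is legitimate, but it would be cleaner either to isolate it as a lemma or to note that it follows directly from the hypothesis by linear algebra (the linear forms $g(x_i)-h(x_i)$ span all linear forms in $x_1,\ldots,x_d$ because $h^{-1}g$ has no nonzero fixed vector), which moreover removes any reliance on $\KK$ being algebraically closed.
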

\begin{proof}
By Proposition \ref{characterization}, we have that $\Sing(S/I_G) = \{ \fn \}$ and so $\dim(\Sing(S/I_G)) = 0$. Let $\A(I_G)$ be the union of the associated primes of $I_G^n$ for all $n \geq 1$. Let $\fp$ be a prime ideal such that $I_G \subseteq \fp$ and such that $\dim(S/\fp) > \dim(\Sing(S/I_G))=0$. This implies that $\fp \not\in \Sing(S/I_G)$, that is, $(S/I_G)_\fp$ is a regular local ring. Thus, $I_GS_\fp$ is a complete intersection. Therefore, $I_G^nS_\fp$ is an unmixed ideal. This means that $I_G^nS_\fp$ has no embedded primes, and so, $I_G^{(n)}S_\fp = I_G^nS_\fp$ for all $n$.
\end{proof}

\vspace*{0.2cm}

\hspace*{-0.6cm} By the previous proof we have that for every $\fp \not\in \Sing(S/I_G)$, $\fp$ is not an embedded prime of $I_G^n$ for all $n \geq 1$. Hence, $\A(I_G) \backslash \Min(I_G) \subseteq \Sing(S/I_G)$. 

\vspace*{0.4cm}

\begin{cor} \label{cyclic}
Let $R = \KK[x_1, \ldots, x_d]$ be a polynomial ring over a field $\KK$ and $G = \langle g \rangle$ be a cyclic group of order $t$ where $g(x_i) = \omega x_i$ and $\omega$ is a $t$-th root of unity which belongs to $\KK$. Then, $I_G^{(n)} S_\fp = I_G^n S_\fp$ for all $n \in \NN$ and for all prime ideals $\fp$ different from $\mathfrak{n}$.
\end{cor}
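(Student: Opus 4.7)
The plan is simply to invoke Theorem \ref{equality characterization}, which already delivers the conclusion $I_G^{(n)} S_\fp = I_G^n S_\fp$ for every prime $\fp \neq \fn$ under the single hypothesis that every non-identity element of $G$ fixes only the origin of $V = \KK^d$. So the whole task reduces to verifying this fixed-point hypothesis for the specific cyclic action in the statement. No further ideal-theoretic work is needed; the heavy lifting (relating the hypothesis to $\Sing(S/I_G) = \{\fn\}$, and then deducing the local equality from unmixedness of complete-intersection localizations) has been done in Proposition \ref{characterization} and Theorem \ref{equality characterization}.

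Next I would record the small sanity check that $\omega$ must be a primitive $t$-th root of unity. Since $G = \langle g \rangle$ has order exactly $t$ and $g$ acts as scalar multiplication by $\omega$ on every variable, the order of $g$ coincides with the multiplicative order of $\omega$; if $\omega^k = 1$ for some $1 \le k < t$, then $g^k = e$, contradicting $|G| = t$. Hence $\omega^k \neq 1$ for $1 \le k < t$.

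Then I would check the fixed-point condition directly. A non-identity element of $G$ has the form $g^k$ for some $1 \le k < t$, and acts by $g^k(x_i) = \omega^k x_i$. For $a = (a_1,\ldots,a_d) \in \KK^d$, the equation $g^k(a) = a$ is equivalent to $(\omega^k - 1) a_i = 0$ for all $i$. Since $\omega^k - 1 \neq 0$ is a unit of $\KK$, this forces $a_i = 0$ for all $i$, i.e., $a$ is the origin. So every $g^k \neq e$ fixes only the origin, and Theorem \ref{equality characterization} applies to give the corollary.

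There is essentially no obstacle in this argument; the only subtle point is the implicit use of the fact that $\omega$ has order exactly $t$, which needs a one-line justification from $|G|=t$ rather than being read off the phrase ``$t$-th root of unity''. The rest is a one-step invocation of the preceding theorem.
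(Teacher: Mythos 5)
Your proposal is correct and follows essentially the same route as the paper: verify directly that each $g^m$ with $1 \le m < t$ fixes only the origin (since $\omega^m \neq 1$), then invoke Theorem \ref{equality characterization}. Your explicit one-line justification that $\omega$ has multiplicative order exactly $t$ (from $|G| = t$) is a point the paper leaves implicit, but it does not change the argument.
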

\begin{proof}
We observe that the action of $G$ in $V = \KK^d$ is defined by $g(a_1, \ldots, a_d) = (\omega a_1, \ldots, \omega a_d)$. Let $a = (a_1, \ldots, a_d) \in \KK^d$ be a fixed point by $g^m$ for some $m = 1, \ldots, t-1$. Then $(a_1, \ldots, a_d) = g^m(a_1, \ldots, a_d) = (\omega^m a_1, \ldots, \omega^m a_d)$ and so $a_i = \omega^m a_i$. If $a_i \neq 0$ then $\omega^m = 1$ which is not possible. Hence $a_i = 0 $ for all $i=1, \ldots, d$. Therefore, $g^m$ fixes only the origin for all $m=1, \ldots, t-1$. By Theorem \ref{equality characterization}, we obtain $I_G^{(n)}S_\fp = I_G^nS_\fp$ for all $\fp \neq \fn$.
\end{proof}

\section*{Acknowledgments}

\hspace*{-0.55cm} I am grateful to Jack Jeffries for suggesting the problem to me and for his helpful suggestions and comments. I thank Luis Núñez-Betancourt, Eloisa Grifo, and Alexandra Seceleanu for their useful comments. I acknowledge the support of Cátedras Marcos Moshinsky of Nuñez-Betancourt and CONACyT Grant 284598. Finally, I thank the Center for Research in Mathematics (CIMAT) and the University of Guanajuato where the research started. \\


\newcommand{\etalchar}[1]{$^{#1}$}
 \newcommand{\noop}[1]{}

\end{document}